\newcommand{\blue}{\color{blue}}
\newcommand{\red}{\color{red}}
\newcommand{\green}{\color{green}}
\newtheorem{Lemma}{Lemma}[section]
\newtheorem{Theorem}[Lemma]{Theorem}
\newtheorem{Proposition}[Lemma]{Proposition}
\newtheorem{Corollary}[Lemma]{Corollary}
\theoremstyle{definition}
\newtheorem{Definition}[Lemma]{Definition}
\newtheorem{Remark}[Lemma]{Remark}
\newtheorem{Example}[Lemma]{Example}
\numberwithin{equation}{section}
\newcommand{\NN }{\mathbb{N}}
\newcommand{\CC }{\mathbb{C}}
\newcommand{\ZZ }{\mathbb{Z}}
\newcommand{\id }{\mathrm{id}}
\DeclareMathOperator{\SL}{SL}
\DeclareMathOperator{\md}{mod}
\newcommand{\ta }{\eta}
\newcommand{\emu }{\mu}
\title[Frieze patterns with coefficients]
{Frieze patterns with coefficients}
\author{Michael~Cuntz}
\address{Michael Cuntz, Leibniz Universit\"at Hannover,
Institut f\"ur Algebra, Zahlentheorie und Diskrete Mathematik,
Fakult\"at f\"ur Mathematik und Physik,
Welfengarten 1,
D-30167 Hannover, Germany}
\email{cuntz@math.uni-hannover.de}
\urladdr{https://www.iazd.uni-hannover.de/cuntz.html}
\author{Thorsten~Holm}
\address{Thorsten Holm, Leibniz Universit\"at Hannover,
Institut f\"ur Algebra, Zahlentheorie und Diskrete Mathematik,
Fakult\"at f\"ur Mathematik und Physik,
Welfengarten 1,
D-30167 Hannover, Germany}
\email{holm@math.uni-hannover.de}
\urladdr{http://www2.iazd.uni-hannover.de/\~{ }tholm}
\author{Peter~J{\o}rgensen}
\address{Peter J{\o}rgensen, School of Mathematics and Statistics,
Newcastle University, Newcastle upon Tyne NE1 7RU,
United Kingdom}
\email{peter.jorgensen@ncl.ac.uk}
\urladdr{https://www.staff.ncl.ac.uk/peter.jorgensen/}
\keywords{Frieze pattern, tame frieze pattern, quiddity cycle, cluster algebra, polygon, triangulation}
\subjclass[2010]{05E15, 05E99, 13F60, 51M20}
\begin{document}

\begin{abstract}
Frieze patterns, as introduced by Coxeter in the 1970's, are closely related to cluster algebras without coefficients. A suitable generalization of frieze patterns, linked to cluster algebras with coefficients, has only briefly appeared in an unpublished manuscript by Propp. In this paper we study these frieze patterns with coefficients systematically and prove various fundamental results, generalizing classic results for frieze patterns. As a consequence we see how frieze patterns with coefficients can be obtained from classic frieze patterns by cutting out subpolygons from the triangulated polygons associated to classic Conway-Coxeter frieze patterns. We address the question of which frieze patterns with coefficients can be obtained in this way and solve this problem completely for triangles. Finally, we prove a finiteness result for frieze patterns with coefficients by showing that for a given boundary sequence there are only finitely many (non-zero) frieze patterns with coefficients with entries in a discrete subset of the complex numbers.
\end{abstract}

\maketitle


\section{Introduction}

Frieze patterns have been introduced by Coxeter
\cite{Cox71} and a beautiful theory for frieze patterns over positive integers 
has been developed subsequently by Conway and Coxeter \cite{CC73}. Only
some three decades later the importance of frieze patterns for other 
areas of mathematics has become clearer when Fomin and Zelevinsky invented 
cluster algebras \cite{FZI}. Namely, the exchange condition in cluster algebras 
mimics the local condition defining frieze patterns. In this way, 
starting with a set of indeterminates, the entries in a frieze pattern
(over the field of rational functions) 
are precisely the cluster variables of the corresponding cluster algebra of Dynkin type $A$ with trivial coefficients.
Via cluster algebras, frieze patterns are now connected to many areas of mathematics
and therefore, frieze patterns are currently a very active area of research;
see the survey by Morier-Genoud \cite{MG15} for more details. 

Classically, a frieze pattern is an infinite array of the form
\[
\begin{array}{ccccccccccc}
 & & \ddots & & & &\ddots  & & & \\
~0~ & ~1~ & ~c_{i-1,i+1}~ & ~c_{i-1,i+2}~ & ~\cdots~ & ~\cdots~ & ~c_{i-1,n+i}~ & ~1~ & ~0~ & & \\
& 0 & 1 & c_{i,i+2} & c_{i,i+3} & \cdots & \cdots & c_{i,n+i+1} & ~1~ & ~0~ & \\
& & 0 & 1 & c_{i+1,i+3} & c_{i+1,i+4} & \cdots & \cdots & c_{i+1,n+i+2} & ~1~ & ~0~ \\
 & & & & \ddots  & & & &\ddots  & 
\end{array}
\]
such that all neighbouring $2\times 2$-matrices have determinant 1.

From the cluster algebra
viewpoint the two bounding diagonals of $1$'s mean to set all the 
coefficients (or frozen variables) in the
cluster algebra equal to 1. Basically, the entire extensive recent 
literature on frieze patterns deals with such classic frieze patterns. 

In the theory of cluster algebras the coefficients
are very important, so it would be natural to consider more general frieze 
patterns where the bounding diagonals can have arbitrary entries. Such frieze
patterns have appeared briefly already in an unpublished manuscript by Propp
\cite {Propp}. In particular, one finds there the modified local condition
on $2\times 2$-determinants, now involving the boundary entries. This 
is set up in such a way that, again, the entries in a certain frieze pattern
(over the field of rational functions) are precisely the cluster variables
of the corresponding cluster algebra with coefficients.

Surprisingly, it seems that this topic has so far not been 
considered further in the literature. 
One aim of the present paper is to develop the general theory of these
more general frieze patterns, which we call {\em frieze patterns with
coefficients}; see Definition \ref{def:frieze} for a precise 
definition. As in the classic case, a satisfactory theory can only be expected
for tame frieze patterns with coefficients, i.e.\ imposing that all adjacent
$3\times 3$-determinants are 0. (There are far too many wild frieze 
patterns, see \cite{Cuntz-wild}.)

In Sections \ref{sec:definition} and \ref{sec:ptolemy} we develop
a theory for tame frieze patterns with coefficients, generalizing several of
the well-known results for classic frieze patterns. 

We introduce certain $2\times 2$-matrices, called $\emu$-matrices, which
govern the propagation along the rows and the columns of a frieze pattern with
coefficients, that is, multiplication with such matrices transforms two
consecutive entries in a row or column to the next two consecutive entries.
See Definition \ref{def:etamatrix_rev} and Proposition \ref{propagation_rev}. 
Moreover, we present in Corollary \ref{cor:cij} formulae for
how every entry in the frieze pattern appears in 
a certain product of $\emu$-matrices. 

As another application of the propagation formulae we prove that 
every tame frieze pattern with coefficients satisfies a 
glide reflection, see Theorem \ref{thm:glide}. A fundamental domain
for the entries of the tame frieze pattern with coefficients under the
glide reflection can be indexed
in such a way that it corresponds bijectively to the edges and diagonals
of a regular $(n+3)$-gon (where $n$ is the height of the frieze pattern). 

This is the starting point in Section \ref{sec:ptolemy}. It
allows a useful alternative viewpoint for tame frieze patterns with
coefficients: the entries in such a frieze pattern become the labels for
the edges and diagonals of a polygon (where the boundary entries become 
the labels for the edges). We show in Theorem \ref{thm:ptolemy}
that these labels have to satisfy many more 
than the defining local 
conditions, namely the so-called Ptolemy relations. 
 
For distinguishing the two viewpoints we speak of a tame {\em frieze with 
coefficients} if we consider a map from the edges and 
diagonals of a polygon satisfying all Ptolemy relations.

In Section \ref{sec:subpolygons} we present a construction for 
obtaining friezes with coefficients from classic friezes: given a classic 
frieze one can cut out any subpolygon and get a frieze with coefficients.
If this frieze with coefficients is a classic frieze, then it is a summand in the sense of \cite{Cuntz-comb} or \cite{HJ-pang}.

This naturally leads to the question which friezes with coefficients can
be obtained from classic friezes by cutting out subpolygons. 
Small examples already show that not every frieze with coefficients can be 
obtained from classic friezes in this way.

The rest of Section \ref{sec:subpolygons} and also Section \ref{sec:triangles} 
deal with this fundamental question for friezes with coefficients over positive
integers. 
Our results not only give insight into friezes with coefficients
but also shed new light on the classic Conway-Coxeter friezes, i.e. friezes
over positive integers with all boundary entries equal to 1. 

In Lemma \ref{lem:gcd} we show that any triangle cut out of a classic Conway-Coxeter
frieze has the property that the greatest common divisor of any two of the labels
on the triangle divides the third label. 
Conversely, we show in Lemma \ref{accordion} that given two coprime natural
numbers $a,b$ there exists a classic Conway-Coxeter frieze containing a triangle 
with labels $1,a,b$. The proof of this result exhibits a close connection between
friezes and the Euclidean Algorithm, which we think might be of interest in its
own right. 

Note that the results just mentioned do not completely settle the question
which labelled triangles can actually appear as subpolygons of classic
Conway-Coxeter friezes. It turns out that this problem is rather subtle
and we address it in Section \ref{sec:triangles}. As a main result we give 
a complete classification of the problem, that is, we describe the 
triples $(a,b,c)\in \mathbb{N}^3$ such that there exists a triangle
in some classic Conway-Coxeter frieze with labels $a,b,c$:

\begin{Theorem}[Thm.\ \ref{Delta}]
Let $a,b,c\in \NN$.
Then the triple $(a,b,c)$ appears as labels of a triangle in a classic Conway-Coxeter frieze if and only if
$\gcd(a,b)=\gcd(b,c)=\gcd(a,c)$ and
\[ \nu_2(a)=\nu_2(b)=\nu_2(c)=0\quad  \text{or} \quad
|\{\nu_2(a),\nu_2(b),\nu_2(c)\}|>1, \]
where $\nu_2(n)$ is the $2$-valuation of $n$.
\end{Theorem}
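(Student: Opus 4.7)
For necessity, assume $(a,b,c)$ is realised as the label triple of a triangle in some classic Conway--Coxeter frieze. Lemma~\ref{lem:gcd} gives $\gcd(a,b)\mid c$ and the two symmetric divisibilities. Since $\gcd(a,b)$ divides both $a$ and $c$, it divides $\gcd(a,c)$, and the reverse divisibility follows from $\gcd(a,c)\mid b$; running this cyclically shows all three pairwise gcds are equal. The delicate half of necessity is the $2$-valuation condition. I argue by contradiction: assume $\nu_2(a)=\nu_2(b)=\nu_2(c)=k\geq 1$. In the polygon viewpoint of Section~\ref{sec:ptolemy} no side of the triangle can be a boundary edge, since edges have label $1$ and so $2$-valuation $0$; hence each of the three arcs into which the three triangle vertices split the polygon contains at least one interior vertex $v_0$. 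Applying Theorem~\ref{thm:ptolemy} to the quadrilateral spanned by $v_0$ and the three triangle vertices produces a Ptolemy identity $\alpha\,x=\beta\,y+\gamma\,z$ with $\{\alpha,\beta,\gamma\}=\{a,b,c\}$ and $x,y,z$ the frieze values of the three chords from $v_0$ to the triangle vertices. Dividing by $2^k$ and using that $a/2^k,b/2^k,c/2^k$ are odd and pairwise coprime (from the equal-gcds condition), the equation collapses modulo $2$ to a strong constraint on $(x,y,z)$. A careful inductive $2$-adic analysis along a sequence of auxiliary vertices $v_0$, or an iterated Ptolemy expansion in a smaller subpolygon containing a triangle with the same forbidden $2$-valuation pattern, should force the $2$-adic valuations of the auxiliary entries upwards without bound and thereby yield the desired contradiction.

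For sufficiency, assume $(a,b,c)$ satisfies both conditions and construct a classic Conway--Coxeter frieze with a triangle labelled $(a,b,c)$. Lemma~\ref{accordion} already covers the case where one of the labels equals $1$. For coprime triples without any entry equal to $1$, I extend the accordion construction by gluing further triangulated subpolygons along the side of label $1$ and updating the quiddity cycle so as to promote that $1$ to its prescribed target value, while the other two labels evolve in a controlled way; the Euclidean-algorithm structure exhibited in the proof of Lemma~\ref{accordion} is precisely what makes this incremental construction work, and the $2$-valuation hypothesis guarantees that no parity obstruction blocks it. The non-coprime case, with common gcd $d>1$, reduces to the coprime triple $(a/d,b/d,c/d)$ (which still satisfies both conditions) by performing the same type of polygonal gluing in a larger ambient polygon so as to realise the common factor $d$ along each side of the target triangle. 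The main obstacle is the $2$-valuation half of necessity: a single Ptolemy identity does not by itself yield a contradiction, so the $2$-adic bookkeeping must be iterated carefully; the sufficiency likewise requires an extension of Lemma~\ref{accordion} to triangles with all three labels $\geq 2$, and this extension is the main new technical ingredient on that side.
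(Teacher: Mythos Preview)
Your derivation of the equal-gcds condition from Lemma~\ref{lem:gcd} is correct and matches the paper. The remainder of the proposal, however, has genuine gaps on both sides.

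For the $2$-valuation half of necessity, your claim that $a/2^k$, $b/2^k$, $c/2^k$ are pairwise coprime is false: take $a=b=c=6$, where all pairwise gcds equal $6$, all $2$-valuations equal $1$, yet $a/2=b/2=c/2=3$. More importantly, a single Ptolemy identity $\alpha x=\beta y+\gamma z$ with $\alpha,\beta,\gamma$ odd after division by $2^k$ yields only the parity relation $x\equiv y+z\pmod 2$, and you give no concrete mechanism by which iterating over further auxiliary vertices forces $2$-adic valuations to increase; the phrase ``should force the $2$-adic valuations \ldots\ upwards without bound'' is a hope, not an argument. The paper takes a completely different route: Lemma~\ref{inscribed_triangle} and Proposition~\ref{prop:Delta1} show that every triangle in a Conway--Coxeter frieze contains an inscribed triangle of the underlying triangulation, so that $(a,b,c)$ must lie in the image of an explicit polynomial map $\Delta\colon S_{\ge 0}\to\ZZ^3$ in six nonnegative integers subject to three coprimeness constraints. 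Theorem~\ref{val} then rules out $\nu_2(a)=\nu_2(b)=\nu_2(c)\ge 1$ by a finite parity case analysis of the defining equations of $\Delta$, with no inductive iteration at all.

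For sufficiency, the reduction from the non-coprime to the coprime case cannot work as you describe: there is no operation on classic Conway--Coxeter friezes that multiplies the labels of a chosen subtriangle by $d>1$ while preserving the all-ones boundary, so realising $(a/d,b/d,c/d)$ does not by itself yield a realisation of $(a,b,c)$. Your extension of the accordion construction to triples with all entries $\ge 2$ is likewise only sketched. The paper again works through $\Delta$: Theorems~\ref{triangles_Z} and~\ref{exi} use B\'ezout coefficients, the Chinese Remainder Theorem, and an explicit $\SL_2(\ZZ)$-transformation $\Gamma_t$ acting on fibres of $\Delta$ to produce a nonnegative preimage $(a_1,a_2,b_1,b_2,c_1,c_2)\in S_{\ge 0}$ of any admissible $(a,b,c)$; Proposition~\ref{prop:Delta2} then glues three separate accordion triangulations (one for each coprime pair $(a_i,a_{i+1})$) around a central triangle. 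The common factor $d$ emerges automatically from the values of $\Delta$ and is never introduced by a scaling step.
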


In Section 6 we go back to frieze patterns with coefficients having entries in any subset 
$R\subseteq \mathbb{C}$ (not necessarily integers as in the previous section). We
address another fundamental question: for a given boundary sequence, how many 
frieze patterns with
coefficients exist over $R$? Our main interest is whether there are finitely
or infinitely many such frieze patterns. Easy examples show that allowing entries to
be 0 rather quickly leads to infinitely many frieze patterns. So we restrict to friezes
patterns
with non-zero entries. 

In Lemma 6.1 we prove a rather general finiteness result, which for a given
boundary sequence yields an upper bound for the values in the quiddity cycle
of the corresponding frieze pattern (i.e. for the entries in the diagonal 
next to the boundary diagonal, or the labels of the length 2 diagonals in the
frieze). This generalizes a similar result for classic frieze patterns
from \cite[Theorem 3.6]{CH}.
As a consequence we can conclude that for every discrete subset $R$ of the complex
numbers and every boundary sequence there exist only finitely many frieze 
patterns over $R\setminus \{0\}$ with the given boundary sequence;
see Proposition \ref{cor:finite}.

\section{Definition and fundamental properties}
\label{sec:definition}

In this section we introduce frieze patterns with coefficients. This concept goes back
to an unpublished manuscript by Propp \cite{Propp}. A large amount of research has been 
carried out on classic frieze patterns, i.e. where all coefficients are set to 1.
A systematic treatment of the far more general frieze patterns with coefficients 
does not seem to be contained in the literature so far. Therefore, we give a detailed 
account here, in particular proving several fundamental properties of such frieze
patterns with coefficients, generalizing well-known properties of the classic 
frieze patterns.

\begin{Definition} \label{def:frieze}
Let $R\subseteq\CC$ be a subset of the complex numbers.
Let $n\in \mathbb{Z}_{\ge 0}$.

A \emph{frieze pattern with coefficients of height $n$} over $R$ 
is an infinite array of the form
\[
\begin{array}{ccccccccccc}
 & & \ddots & & & &\ddots  & & & \\
0 & c_{i-1,i} & c_{i-1,i+1} & c_{i-1,i+2} & \cdots & \cdots & c_{i-1,n+i} & c_{i-1,n+i+1} & 0 & & \\
& 0 & c_{i,i+1} & c_{i,i+2} & c_{i,i+3} & \cdots & \cdots & c_{i,n+i+1} & c_{i,n+i+2} & 0 & \\
& & 0 & c_{i+1,i+2} & c_{i+1,i+3} & c_{i+1,i+4} & \cdots & \cdots & c_{i+1,n+i+2} & c_{i+1,n+i+3} & 0 \\
 & & & & \ddots  & & & &\ddots  & 
\end{array}
\]
where we also set $c_{i,i}=0=c_{i,n+i+3}$ for all $i\in \mathbb{Z}$,
such that the following holds:
\begin{enumerate}
    \item[{(i)}] $c_{i,j}\in R$ for all $i\in\mathbb{Z}$ and $i< j< n+i+3$.
    \item[{(ii)}] $c_{i,i+1}\neq 0$ for all $i\in \mathbb{Z}$.
    \item[{(iii)}] For every
(complete) adjacent $2\times 2$-submatrix $\begin{pmatrix} c_{i,j} & c_{i,j+1}\\
c_{i+1,j} & c_{i+1,j+1} \end{pmatrix}$ we have
\begin{equation*}\tag{$E_{i,j}$}\label{eq:local}
c_{i,j} c_{i+1,j+1} - c_{i,j+1}c_{i+1,j} = c_{i+1,n+i+3}c_{j,j+1}.
\end{equation*}
\end{enumerate}
\end{Definition}

\begin{Remark} \label{rem:frieze}
\begin{enumerate}
    \item The diagonals with entries $c_{i,i+1}$ and $c_{i,n+i+2}$
    are called the {\em boundary} of the frieze pattern with coefficients. 
    Part (iii) of the definition says that
    the determinant of every (complete) adjacent $2\times 2$-submatrix
is given by the product of two specific entries on the boundary of the
frieze pattern, see the blue numbers in Figure \ref{fig:local}. 
\item 
Note that for $j=i+1$,
Equation ($E_{i,i+1}$) imposes that 
$$c_{i,i+1}c_{i+1,i+2} = c_{i+1,n+i+3}c_{i+1,i+2}.
$$
Since $c_{i+1,i+2}\neq 0$ by part (ii) and 
$R\subseteq \mathbb{C}$ has no zero divisors we conclude that 
\begin{equation} \label{eq:glide}
    c_{i,i+1} = c_{i+1,n+i+3} \mbox{\hskip0.3cm for all $i\in \mathbb{Z}$}. 
\end{equation}
This means that there is a glide symmetry on the boundary entries. 
In particular,
applying (\ref{eq:glide}) twice we get $c_{i,i+1}=c_{n+i+3,n+i+4}$ for all $i\in \mathbb{Z}$.
That is, 
for a frieze pattern with coefficients of height $n$ the boundary is periodic of period
$n+3$. Hence, the entire boundary is determined by a sequence of $n+3$ consecutive
boundary entries $(c_{i,i+1},\ldots, c_{n+i+2,n+i+3})$. 
\item
From the glide symmetry in (\ref{eq:glide}) it also follows that 
the local condition (\ref{eq:local}) in Definition \ref{def:frieze}
could as well be stated in terms of two other boundary entries,
namely the red numbers instead of the blue numbers in Figure \ref{fig:local}.
\end{enumerate}
\end{Remark}

\begin{Remark} \label{rem:scale}
Frieze patterns with coefficients can be scaled, that is, if 
$\mathcal{C}=(c_{i,j})$ is a frieze pattern with coefficients and 
$z\in \mathbb{C}\setminus \{0\}$ then $z\mathcal{C}:=(zc_{i,j})$ is again
a frieze pattern with coefficients (possibly over some other subset
$R\subseteq \mathbb{C}$). This follows immediately from 
Definition \ref{def:frieze}. 
\end{Remark}

\begin{figure}
    \centering
   $ \begin{array}{cccccccccccc}
   &  &  & &  & ~0~ &  & & &  & & \\
   &  &  & &  & ~{\red c_{-n-2+j,j}}~ &  & & &  & & \\
   &  &  & &  & ~\vdots~ &  & & &  & & \\
    &  &  & &  & ~\vdots~ &  & &  &  & & \\
    ~0~ & ~{\red c_{i,i+1}}~ & \ldots & & \ldots & ~c_{i,j}~ & ~c_{i,j+1}~ &  & &  & & \\
    &  &  &  & & ~c_{i+1,j}~ & ~c_{i+1,j+1}~ & \ldots & & \ldots & ~{\blue c_{i+1,n+i+3}}~ & ~0~ \\
    & & & & &  & ~\vdots~ &  & & & &  \\
   & & & & &  & ~\vdots~ &  & & & &  \\ 
   & & & & &  & ~{\blue c_{j,j+1}}~ &  & & & &  \\
   & & & & &  & ~0~ &  & & & &
    \end{array}
    $
    \caption{The local condition in a frieze pattern with coefficients.}
    \label{fig:local}
\end{figure}

\begin{Example} \label{ex:frieze}
\begin{enumerate}
\item Classic frieze patterns, as introduced by Coxeter \cite{Cox71}, 
are those frieze patterns with coefficients where all boundary entries are
equal to 1.
    \item All frieze patterns with coefficients of height 0 are of the form
    $$\begin{array}{ccccccccc}
     & \ddots &  & & & & & & \\
    0 & a & ~c~ & 0 & & & & & \\
    & 0 & ~b~ & ~a~ & 0 & & & & \\
    & & 0 & ~c~ & ~b~ & 0 & & & \\
    & & & 0 & ~a~ & ~c~ & 0 & & \\
    & & & & 0 & ~b~ & ~a~ & 0 & \\
    & & & & & 0 & ~c~ & ~b~ & 0 \\
    & & & & &  &  & & \ddots
    \end{array}
$$
with arbitrary non-zero numbers $a,b,c$. 
\item We describe all frieze patterns with coefficents of height 1 over $\mathbb{N}$. Take any four numbers $a,b,c,d\in \mathbb{N}$ and consider
the infinite array of the form (repeated periodically) 
$$\begin{array}{cccccccc}
 & \ddots & & & & & & \\
0 & ~a~ & ~x~ & ~d~ & 0 & & & \\
& 0 & ~b~ & ~y~ & ~a~ & 0 & & \\
& & 0 & ~c~ & ~z~ & ~b~ & 0 & \\
& & & 0 & ~d~ & ~w~ & ~c~ & 0 \\
& & &  &  &  & \ddots &
\end{array}
$$
It is easy to check from the local conditions that $x=z$ and $y=w$.
Then this is a frieze pattern with coefficients if and only if 
$xy = ac+bd$. In other words, given $a,b,c,d\in \mathbb{N}$ we obtain
for each divisor of $ac+bd$ such a frieze pattern with coefficients.

As an explicit example we have the following frieze 
pattern with coefficients over $\mathbb{N}$:
$$\begin{array}{cccccccc}
 & \ddots & & & & & & \\
0 & ~3~ & ~4~ & ~3~ & 0 & & & \\
& 0 & ~7~ & ~9~ & ~3~ & 0 & & \\
& & 0 & ~5~ & ~4~ & ~7~ & 0 & \\
& & & 0 & ~3~ & ~9~ & ~5~ & 0 \\
& & &  &  &  & \ddots &
\end{array}
$$
\end{enumerate}
\end{Example}

For general subsets $R\subseteq \mathbb{C}$ there are many frieze patterns
with coefficients having entries in $R$, and many of them do not exhibit nice 
symmetry properties. This is already the case for classic frieze patterns, see 
\cite{Cuntz-wild}. As in the classic case, a satisfactory theory can only be 
expected for tame frieze patterns.

\begin{Definition} \label{def:tame}
Let $\mathcal{C}$ be a frieze pattern with coefficients as in Definition \ref{def:frieze}.
Then $\mathcal{C}$ is called {\em tame} if every complete adjacent $3\times 3$-submatrix of 
$\mathcal{C}$ has determinant 0.
\end{Definition}

We aim at showing that every tame frieze pattern with coefficients
satisfies a glide symmetry. We have already seen in Remark \ref{rem:frieze}
that the boundary entries have a glide symmetry, namely we have
$c_{i,i+1} = c_{i+1,n+i+3}$ for all $i\in \mathbb{Z}$. 

For extending this to all entries of the frieze patterns with coefficients we need formulae
describing how entries are 'propagated' along rows and columns, that is, how 
to obtain two consecutive entries in a row or column from the previous two entries. 
To this end it is useful to slightly extend the definition of a frieze
pattern with coefficients by introducing two extra diagonals. 

\begin{Remark} \label{def:extfrieze}
Let $\mathcal{C}$ be a frieze pattern with coefficients over $R\subseteq\mathbb{C}$ as in
Definition \ref{def:frieze}. The corresponding extended frieze pattern with coefficients
$\hat{\mathcal{C}}$ is the infinite array of the form
{\footnotesize
\[
\begin{array}{cccccccccccc}
& & & \ddots & & & &\ddots  & & & & \\
-c_{i-2,i-1} & 0 & c_{i-1,i} & c_{i-1,i+1} & \cdots & \cdots & c_{i-1,n+i} & c_{i-1,n+i+1} & 0 & -c_{i-1,i} & & \\
& -c_{i-1,i} & 0 & c_{i,i+1} & c_{i,i+2} & \cdots & \cdots & c_{i,n+i+1} & c_{i,n+i+2} & 0 & -c_{i,i+1} & \\
& & -c_{i,i+1} & 0 & c_{i+1,i+2} & c_{i+1,i+3} & \cdots & \cdots & c_{i+1,n+i+2} & c_{i+1,n+i+3} & 0 & -c_{i+1,i+2} \\
& & & & & \ddots  & & & &\ddots  & & 
\end{array}
\]
}
That is, in addition to Definition \ref{def:frieze} we set 
$$c_{i,i-1}:= - c_{i-1,i} \mbox{\,\,\,\,\,and\,\,\,\,\,} c_{i,n+i+4}:=-c_{i,i+1} \mbox{\,\,\,\,for all $i\in \mathbb{Z}$}.
$$
We mention some fundamental properties of these extended frieze patterns with coefficients.
\begin{enumerate}
    \item All local conditions ($E_{i,j}$) also hold in $\hat{\mathcal{C}}$. In fact, 
    the only new equations to check are ($E_{i,i}$) and ($E_{i,n+i+3}$) for $i\in\mathbb{Z}$.
    The former one reads
    $$c_{i,i}c_{i+1,i+1} - c_{i,i+1} c_{i+1,i} = c_{i+1,n+i+3} c_{i,i+1}
    $$
    which is true since $c_{i,i}=0$ by Definition \ref{def:frieze}, $c_{i+1,i}=-c_{i,i+1}$
    by the above definition and $c_{i+1,n+i+3} = c_{i,i+1}$ by (\ref{eq:glide}). 
    Similarly, one checks that ($E_{i,n+i+3}$) holds.
    \item $\mathcal{C}$ is tame if and only if $\hat{\mathcal{C}}$ is tame (cf.\ Definition \ref{def:tame}).
    In fact, the new complete adjacent $3\times 3$-submatrices in $\hat{\mathcal{C}}$ are of the form
    $$\begin{pmatrix} c_{i-1,i} & c_{i-1,i+1} & c_{i-1,i+2} \\
    0 & c_{i,i+1} & c_{i,i+2} \\
    -c_{i,i+1} & 0 & c_{i+1,i+2} \end{pmatrix}
    \mbox{\,\,\,\,and\,\,\,\,} 
    \begin{pmatrix} c_{i-1,n+i+1} & 0 & -c_{i-1,i} \\
    c_{i,n+i+1} & c_{i,n+i+2} & 0 \\
    c_{i+1,n+i+1} & c_{i+1,n+i+2} & c_{i+1,n+i+3} \end{pmatrix}
    $$
    for $i\in \mathbb{Z}$.
    The matrix on the left can be shown to have determinant 0 by expanding along the first 
    column, using equation ($E_{i-1,i+1}$) and the glide symmetry formula (\ref{eq:glide}).
    Similarly, the matrix on the right has determinant 0. 
    \end{enumerate}
Instead of only adding two further diagonals to a frieze pattern with coefficients $\mathcal{C}$
one could extend $\mathcal{C}$ to an entire $SL_2$-tiling of the plane, still satisfying 
all local conditions and tameness. This is well-known for classic frieze patterns and can 
easily be transferred to frieze patterns with coefficients.
We do not introduce this viewpoint here since it is not used in the present paper. 
\end{Remark}

We can now start to develop the propagation formulae. 
It will turn out that the following matrices are crucial for this. 

\begin{Definition}[$\emu$-matrices and $\ta$-matrices] \label{def:etamatrix_rev}
For $c,d\in \CC$ and $e\in \CC\setminus \{0\}$, let
\begin{eqnarray}
\emu(c,d,e) &:=& \begin{pmatrix} 0 & -\frac{d}{e} \\ 1 & \frac{c}{e} \end{pmatrix}, \\
\ta(c,d,e) &:=& \begin{pmatrix} \frac{c}{e} & -\frac{d}{e} \\ 1 & 0 \end{pmatrix}.
\end{eqnarray}
\end{Definition}

\begin{Remark}
For classic frieze patterns (i.e.\ all coefficients are $1$), the matrices $\eta(c,1,1)$ are used throughout the literature.
For our purposes the matrices $\mu(c,d,e)$ are more convenient. They are closely linked, namely
\[ \emu(c,d,e) = \tau \ta(c,d,e)^T \tau, \]
if $\tau:=\begin{pmatrix} 0&1\\1&0 \end{pmatrix}$.
Moreover, if $d\ne 0$, then
\[ \ta(c,d,e)^{-1} = \tau \ta(c,e,d) \tau. \]
\end{Remark}

The following result provides the propagation formulae. Note that we 
write the entries as they appear in the frieze pattern with coefficients in Definition \ref{def:frieze},
that is as row vectors for the propagation along rows and as column 
vectors for the propagation along columns.

\begin{Proposition}\label{propagation_rev}
Let $R\subseteq \mathbb{C}$ be a subset.
Let $\mathcal{C}=(c_{i,j})$ be a tame frieze pattern with coefficients
over $R$ of height $n$, and $\hat{\mathcal{C}}$ the corresponding extended frieze pattern (see
Remark \ref{def:extfrieze}). We write $d_i:=c_{i,i+1}$, $c_i:=c_{i,i+2}$ for $i\in\ZZ$ and hence $c_{i,i-1}=-c_{i-1,i}=-d_{i-1}$. Then
\begin{enumerate}
    \item \label{eq:rowprop} 
    $(c_{i,j-1}, c_{i,j}) \emu(c_{j-1},d_j,d_{j-1}) = (c_{i,j}, c_{i,j+1})$ 
    \,for all $i\in \mathbb{Z}$ and $i\le j\le n+i+3$.
    \item $\emu(c_{i-1},d_i,d_{i-1})^T \begin{pmatrix} c_{i-1,k} \\ c_{i,k} 
    \end{pmatrix} = \begin{pmatrix} c_{i,k} \\ c_{i+1,k} \end{pmatrix}$
    \,for all $k\in \mathbb{Z}$ and $k-n-3\le i\le k$
\end{enumerate}

\end{Proposition}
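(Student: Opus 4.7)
Both statements are linear three-term recurrences: (1) asserts, for each row $i$, that
\[
d_{j-1} c_{i,j+1} = c_{j-1} c_{i,j} - d_j c_{i,j-1}
\]
as $j$ varies, while (2) is the column analogue,
\[
d_{i-1} c_{i+1,k} = c_{i-1} c_{i,k} - d_i c_{i-1,k},
\]
as $i$ varies. The two claims are proved by completely parallel arguments, so the plan is to write out (1) in detail and then indicate the symmetric modifications for (2).

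For (1), I will fix $j$ and induct on $i$ in both directions from the base cases $i = j-1$ and $i = j$. These base cases reduce to one-line computations using the boundary conventions $c_{i,i} = 0$ and $c_{i,i-1} = -d_{i-1}$ of the extended frieze $\hat{\mathcal{C}}$. It is convenient to reformulate the identity as the assertion that the row vector $(c_{i,j-1},c_{i,j},c_{i,j+1}) \in \CC^3$ is annihilated by the fixed vector $v_j := (d_j, -c_{j-1}, d_{j-1})^T$. The inductive step then becomes: if rows $i-1$ and $i$ of the three-column window are orthogonal to $v_j$, so is row $i+1$. Tameness of $\hat{\mathcal{C}}$ supplies vanishing of the $3\times 3$ determinant on rows $i-1,i,i+1$ and columns $j-1,j,j+1$, which places row $i+1$ in the span of rows $i-1$ and $i$, whence the conclusion.

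The main obstacle is ruling out the degenerate case where rows $i-1$ and $i$ are already linearly dependent in the window, since then tameness is vacuous and imposes no constraint on row $i+1$. This is cleanly resolved by the local condition applied to the $2 \times 2$ submatrix on rows $i-1,i$ and columns $j,j+1$: its determinant equals $d_{i-1}d_j \ne 0$ after using the glide symmetry on the boundary entries. A nonzero $2\times 2$ projection forces the ambient $3$-vectors to be linearly independent, so the inductive step is never vacuous.

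For part (2), the symmetric argument fixes $i$ and inducts on $k$ from the seed columns $(c_{i-1,k},c_{i,k},c_{i+1,k})^T$ at $k=i$ and $k=i+1$, which are immediately seen to be linearly independent from their boundary values $(d_{i-1},0,-d_i)^T$ and $(c_{i-1},d_i,0)^T$. Non-degeneracy at each subsequent step is again supplied by the nonzero $2\times 2$ local determinant $d_{i-1}d_{k-1}$ on rows $i-1,i$ and columns $k-1,k$, so tameness propagates orthogonality against $(d_i,-c_{i-1},d_{i-1})$ to every column of the three-row window.
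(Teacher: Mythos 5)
Your proof is correct and rests on the same two pillars as the paper's: tameness forces any three consecutive rows of the window on columns $j-1,j,j+1$ to be linearly dependent, while the local condition together with the glide symmetry of the boundary yields the nonzero $2\times 2$ minor (of the form $d_{\bullet}d_{j}\neq 0$) that rules out the degenerate case. The only organizational difference is that you verify the three-term recurrence directly on the anchor rows $i=j-1,j$ of the extended frieze and then propagate it by a span argument, whereas the paper solves for the recurrence coefficients $s=-d_j/d_{j-1}$ and $t=c_{j-1}/d_{j-1}$ from the local relations $(E_{i,j})$, $(E_{i,j-1})$ and the distinguished row $j-1$; both routes are sound.
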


\begin{proof}
We prove part (1), the proof of part (2) is similar. 

We first consider the cases $j=i$ and $j=i+1$ separately. 
For $j=i$ we have
$$(c_{i,i-1}, c_{i,i}) \emu(c_{i-1},d_i,d_{i-1}) = 
(-d_{i-1},0) \begin{pmatrix} 0 & -\frac{d_i}{d_{i-1}} \\ 1 & \frac{c_{i-1}}{d_{i-1}} \end{pmatrix}
= (0, d_i) = (c_{i,i}, c_{i,i+1})$$ 
as claimed. For $j=i+1$ we similarly get 
$$(c_{i,i}, c_{i,i+1}) \emu(c_{i},d_{i+1},d_{i}) = 
(0,d_i) \begin{pmatrix} 0 & -\frac{d_{i+1}}{d_{i}} \\ 1 & \frac{c_{i}}{d_{i}} \end{pmatrix}
= (d_i, c_i) = (c_{i,i+1}, c_{i,i+2}).$$ 
Now suppose $i+2\le j\le n+i+3$.
Then we consider the following complete adjacent $3\times 3$-submatrix of the extended frieze 
pattern
\[ M =
\begin{pmatrix}
c_{i,j-1} & c_{i,j} & c_{i,j+1} \\
c_{i+1,j-1} & c_{i+1,j} & c_{i+1,j+1} \\
c_{i+2,j-1} & c_{i+2,j} & c_{i+2,j+1}
\end{pmatrix}.
\]
The first two columns of $M$ cannot be linearly dependent because the upper left $2\times 2$-submatrix has determinant $d_i d_{j-1} \ne 0$, by Definition \ref{def:frieze} 
and Remark \ref{rem:frieze}.

But $\mathcal{C}$ is tame by assumption, hence the determinant of $M$ is zero, so
there are suitable numbers $s,t$ such that 
\[ M =
\begin{pmatrix}
c_{i,j-1} & c_{i,j} & sc_{i,j-1}+tc_{i,j} \\
c_{i+1,j-1} & c_{i+1,j} & sc_{i+1,j-1}+tc_{i+1,j} \\
c_{i+2,j-1} & c_{i+2,j} & sc_{i+2,j-1}+tc_{i+2,j}
\end{pmatrix}. 
\]
Now Equations ($E_{i,j}$) and ($E_{i,j-1}$) imply
\begin{eqnarray*}
d_i d_{j} &=& c_{i,i+1} c_{j,j+1} = c_{i+1,n+i+3} c_{j,j+1} \\ 
& = & c_{i,j}c_{i+1,j+1}-c_{i,j+1} c_{i+1,j} \\
& = & c_{i,j}(sc_{i+1,j-1}+tc_{i+1,j})-c_{i+1,j}(sc_{i,j-1}+tc_{i,j}) \\
&=& s(c_{i,j}c_{i+1,j-1} - c_{i+1,j}c_{i,j-1})  \\
& = & -s c_{i+1,n+i+3} c_{j-1,j} = -s d_i d_{j-1},
\end{eqnarray*}
and we conclude that $s=-\frac{d_{j}}{d_{j-1}}$.
Thus we see from the shape of the matrix $M$ 
that for fixed $j-1$, there is a $t_{j-1}=t$ such that for all $i$ we have
$$(c_{i,j-1}, c_{i,j}) \begin{pmatrix} 0 & -\frac{d_j}{d_{j-1}} \\
1 & t_{j-1} \end{pmatrix} = (c_{i,j}, c_{i,j+1}).
$$
In particular, this equation holds for $i=j-1$ and we get
\[ c_{j-1} = c_{j-1,j+1} = -\underbrace{c_{j-1,j-1}}_{=0} \frac{d_j}{d_{j-1}} + c_{j-1,j}t_{j-1} =
c_{j-1,j} t_{j-1} = d_{j-1} t_{j-1}, \]
hence $t_{j-1} = \frac{c_{j-1}}{d_{j-1}}$.
Altogether we obtain 
$$(c_{i,j-1}, c_{i,j}) \emu(c_{j-1},d_j,d_{j-1}) 
= (c_{i,j-1}, c_{i,j}) \begin{pmatrix} 0 & -\frac{d_j}{d_{j-1}} \\
1 & \frac{c_{j-1}}{d_{j-1}} \end{pmatrix} = (c_{i,j}, c_{i,j+1}),
$$
as claimed.
\end{proof}

As a consequence we can give a useful formula for determining the entries of
the frieze pattern with coefficients from the corresponding $\emu$-matrices.

\begin{Corollary} \label{cor:cij}
Let $R\subseteq \mathbb{C}$ be a subset.
Let $\mathcal{C}=(c_{i,j})$ be a tame frieze pattern with coefficients over $R$ of height
$n$, and as before set $d_i=c_{i,i+1}$, $c_i=c_{i,i+2}$ for $i\in\ZZ$.
Then we have
\begin{enumerate}
    \item
$\prod_{k=i}^{j} \emu(c_{k-1},d_k,d_{k-1})
= \frac{1}{d_{i-1}} \begin{pmatrix} -c_{i,j} & -c_{i,j+1} \\ c_{i-1,j} & c_{i-1,j+1} \end{pmatrix}$
for all $i\in \mathbb{Z}$ and $i-1\le j\le n+i+2$,
\item
$\prod_{k=1}^{n+3} \emu(c_{k-1},d_{k},d_{k-1}) = -\id$.
\end{enumerate}
\end{Corollary}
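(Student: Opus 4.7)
The plan is to prove part (1) by induction on $j$ and then derive part (2) by specializing (1).

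For part (1), I would take the empty product $j=i-1$ as the base case, so that the left-hand side is $\id$. For the right-hand side, the extended frieze conventions from Remark \ref{def:extfrieze} give $c_{i,i-1}=-c_{i-1,i}=-d_{i-1}$, $c_{i,i}=0=c_{i-1,i-1}$, and $c_{i-1,i}=d_{i-1}$, so the claimed matrix collapses to $\frac{1}{d_{i-1}}\begin{pmatrix} d_{i-1} & 0 \\ 0 & d_{i-1}\end{pmatrix}=\id$, matching. For the inductive step, I would assume the formula for $j-1$ and multiply both sides on the right by $\emu(c_{j-1},d_j,d_{j-1})$. Applying the row propagation formula of Proposition \ref{propagation_rev}(1) separately to the two rows of the right-hand matrix — with row index $i$ for the top row and row index $i-1$ for the bottom row — turns the columns $j-1, j$ into columns $j, j+1$, producing exactly the claimed matrix for index $j$. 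The constraint $i-1\le j\le n+i+2$ ensures $j$ is in the valid range for Proposition \ref{propagation_rev}(1) for both row indices.

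For part (2), I would specialize part (1) to $i=1$ and $j=n+3$, giving
\[
\prod_{k=1}^{n+3} \emu(c_{k-1},d_k,d_{k-1}) = \frac{1}{d_0}\begin{pmatrix} -c_{1,n+3} & -c_{1,n+4} \\ c_{0,n+3} & c_{0,n+4} \end{pmatrix}.
\]
It remains to identify the four entries on the right. The glide symmetry \eqref{eq:glide} with $i=0$ gives $c_{1,n+3}=c_{0,1}=d_0$; the convention $c_{i,n+i+3}=0$ in Definition \ref{def:frieze} gives $c_{0,n+3}=0$ and $c_{1,n+4}=0$; and the extended-frieze convention $c_{i,n+i+4}=-c_{i,i+1}$ from Remark \ref{def:extfrieze} gives $c_{0,n+4}=-c_{0,1}=-d_0$. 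Substituting, the matrix becomes $\frac{1}{d_0}\begin{pmatrix} -d_0 & 0 \\ 0 & -d_0 \end{pmatrix}=-\id$, as required.

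The one subtle point is bookkeeping at the boundary of the frieze, where the different conventions ($c_{i,i}=0$, $c_{i,i-1}=-c_{i-1,i}$, $c_{i,n+i+3}=0$, $c_{i,n+i+4}=-c_{i,i+1}$, and the glide \eqref{eq:glide}) must all be invoked consistently; this is also the source of the sign in $-\id$, since the top-right and bottom-left corners of the evaluated matrix vanish precisely because of the $c_{i,n+i+3}=0$ convention, while the diagonal entries are negative because of the extended-frieze sign. Once these conventions are carefully tracked, the argument reduces to a direct computation.
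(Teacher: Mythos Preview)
Your proof is correct and follows essentially the same approach as the paper: part (1) is obtained by iterating Proposition \ref{propagation_rev}(1) along rows $i$ and $i-1$ starting from the initial pair at $j=i-1$, and part (2) is the specialization $i=1$, $j=n+3$ together with the boundary identities. The only cosmetic difference is that the paper packages the iteration as a single matrix equation (left-multiplying the product by $\begin{pmatrix} -d_{i-1} & 0 \\ 0 & d_{i-1}\end{pmatrix}$) rather than writing out the induction explicitly.
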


\begin{proof}
We consider the extended frieze pattern with coefficients as in Remark \ref{def:extfrieze}, 
in particular we set $c_{i,i-1}=-d_{i-1}$ for all $i\in \mathbb{Z}$.

$(1)$ From the first part of Proposition \ref{propagation_rev} we know that
\begin{eqnarray*}
\begin{pmatrix} -d_{i-1} & 0 \\ 0 & d_{i-1} \end{pmatrix}
\cdot \left( \prod_{k=i}^j \emu(c_{k-1},d_k,d_{k-1})\right)
& = & 
\begin{pmatrix} c_{i,i-1} & c_{i,i} \\ c_{i-1,i-1} & c_{i-1,i} \end{pmatrix}
\cdot \left( \prod_{k=i}^j \emu(c_{k-1},d_k,d_{k-1})\right) \\
& = & \begin{pmatrix} c_{i,j} & c_{i,j+1} \\ c_{i-1,j} & c_{i-1,j+1} \end{pmatrix}
\end{eqnarray*}
and the claim follows. 
\smallskip

$(2)$
By part $(1)$ for $i=1$ and $j=n+3$, we have
$$\prod_{k=1}^{n+3} \emu(c_{k-1},d_k,d_{k-1})
= \frac{1}{d_{0}} \begin{pmatrix} -c_{1,n+3} & -c_{1,n+4} \\ c_{0,n+3} & c_{0,n+4} \end{pmatrix}.$$
Now the claim follows since $c_{0,n+3}=0=c_{1,n+4}$ by Definition \ref{def:frieze}, $c_{0,n+4}=-c_{0,1}=-d_0$ by Remark \ref{def:extfrieze},
and $c_{1,n+3}=c_{0,1}=d_0$ by Equation (\ref{eq:glide}).
\end{proof}

As another main result of this section
we can now prove that the entries of a tame frieze pattern with coefficients 
are invariant under a glide reflection. This will become crucial in the next sections. 

\begin{Theorem} \label{thm:glide}
Let $R\subseteq \mathbb{C}$ be a subset.
Let $\mathcal{C}=(c_{i,j})$ be a tame frieze pattern with coefficients over $R$ of height
$n$. Then for all entries of $\mathcal{C}$ we have
$$c_{i,j} = c_{j,n+i+3}.
$$
\end{Theorem}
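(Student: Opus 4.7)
The plan is to prove $c_{i,j}=c_{j,n+i+3}$ by a single iteration of the column--propagation formula applied to a cleverly chosen starting pair, and then matching the result to the matrix formula of Corollary \ref{cor:cij}(1).

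First I would dispatch the trivial ranges: for $j=i$ and $j=n+i+3$ both sides vanish by Definition \ref{def:frieze} and Remark \ref{def:extfrieze}, so it suffices to treat $i+1\le j\le n+i+2$. The decisive input is the identification of the two topmost entries in column $n+i+3$. From Definition \ref{def:frieze} we have $c_{i,n+i+3}=0$, and from the extended frieze pattern of Remark \ref{def:extfrieze} we have $c_{i-1,n+i+3}=c_{i-1,n+(i-1)+4}=-d_{i-1}$. Starting the column propagation from this pair rather than from $(0,0)^T$ is what makes the argument productive.

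Iterating Proposition \ref{propagation_rev}(2) from row $i$ down to row $j$ yields
\[
\begin{pmatrix} c_{j-1,n+i+3} \\ c_{j,n+i+3} \end{pmatrix}
=\left(\prod_{l=i}^{j-1}\emu(c_{l-1},d_l,d_{l-1})\right)^{\!T}\begin{pmatrix} -d_{i-1} \\ 0 \end{pmatrix},
\]
so reading off the second component gives
$c_{j,n+i+3}=-d_{i-1}\cdot\bigl[\prod_{l=i}^{j-1}\emu(c_{l-1},d_l,d_{l-1})\bigr]_{12}$.
To finish, I would apply Corollary \ref{cor:cij}(1) with its $i$ being $i$ and its $j$ being $j-1$: the corollary expresses the product as $\tfrac{1}{d_{i-1}}\begin{pmatrix}-c_{i,j-1} & -c_{i,j}\\ c_{i-1,j-1} & c_{i-1,j}\end{pmatrix}$, whose $(1,2)$-entry is $-c_{i,j}/d_{i-1}$. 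Substituting gives $c_{j,n+i+3}=c_{i,j}$, as required.

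There is no substantive obstacle, since both Proposition \ref{propagation_rev} and Corollary \ref{cor:cij} are already in hand. The only step requiring any real thought is the choice of starting pair: the extended frieze pattern of Remark \ref{def:extfrieze} supplies the value $-d_{i-1}$ directly above the boundary zero at position $(i,n+i+3)$, and without this extension column propagation would start from $(0,0)^T$ and propagate only zeros, conveying no information.
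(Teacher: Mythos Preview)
Your argument is correct and follows essentially the same route as the paper: propagate down column $n+i+3$ starting from the extended entry $(c_{i-1,n+i+3},c_{i,n+i+3})=(-d_{i-1},0)$, then identify the resulting matrix product via Corollary~\ref{cor:cij}(1). The only difference is cosmetic: the paper iterates one step further, obtaining $\begin{pmatrix} c_{j,n+i+3}\\ c_{j+1,n+i+3}\end{pmatrix}=\bigl(\prod_{k=i}^{j}\emu(c_{k-1},d_k,d_{k-1})\bigr)^{T}\begin{pmatrix}-d_{i-1}\\0\end{pmatrix}$ and reading the $(1,1)$-entry of the product, whereas you stop at $j-1$ and read the $(1,2)$-entry.
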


\begin{proof}
We consider the $i^{th}$ row and the $(n+i+3)^{rd}$ column in the extended
frieze pattern corresponding to $\mathcal{C}$:
$$\begin{array}{cccccccccc}
     &  &  &  & &   & & -d_{i-1}  & \\
    ~-d_{i-1}~ & ~0~ & ~d_i~ & ~\ldots~ & ~c_{i,j}~ & ~\ldots~ & ~c_{i,n+i+2}~ & ~0~ & ~-d_i~ &  \\
    &  &  &  & &   & & c_{i+1,n+i+3}   & 0 \\
    &  &  &  & &   & & c_{i+2,n+i+3}   & \\
    &  &  &  & &   & & \vdots  & \\
    &  &  &  & &   & & c_{j-1,n+i+3}  & \\
    &  &  &  & &   & & c_{j,n+i+3}  & \\
    &  &  &  & &   & & c_{j+1,n+i+3}  & \\
    &  &  &  & &   & & \vdots & \\
    &  &  &  & &   & & d_{n+i+2} & \\
    &  &  &  & &   & & 0 & \\
     &  &  &  & &   & & -d_{n+i+3} & \\
\end{array}
$$
We propagate along the $(n+i+3)^{rd}$ column using Proposition \ref{propagation_rev}\,(2) and get
\begin{eqnarray*}
\begin{pmatrix} c_{j,n+i+3} \\ c_{j+1,n+i+3} \end{pmatrix}
& = & \emu(c_{j-1},d_j,d_{j-1})^T\ldots \emu(c_{i},d_{i+1},d_{i})^T \emu(c_{i-1},d_{i},d_{i-1})^T
\begin{pmatrix} -d_{i-1} \\ 0 \end{pmatrix} \\
& = & \left( \prod_{k=i}^j \emu(c_{k-1},d_k,d_{k-1}) \right)^T 
\begin{pmatrix} -d_{i-1} \\ 0 \end{pmatrix}. 
\end{eqnarray*}
This implies that
$$
c_{j,n+i+3} =  -d_{i-1} \left( \left( \prod_{k=i}^j \emu(c_{k-1},d_k,d_{k-1})\right)^T \right)_{1,1} 
= -d_{i-1}\left( \prod_{k=i}^j \emu(c_{k-1},d_k,d_{k-1}) \right)_{1,1} 
= c_{i,j}
$$
where the last equality holds by Corollary \ref{cor:cij}.
\end{proof}

\section{Ptolemy relations}
\label{sec:ptolemy}

In the previous section we 
have seen that every tame frieze pattern with coefficients
over some subset $R\subseteq \mathbb{C}$ 
satisfies a glide symmetry, see Theorem \ref{thm:glide}. More precisely, Theorem \ref{thm:glide} states that
in Figure \ref{fig:glide}
the red area forms a fundamental domain for the action of the glide symmetry;
that is, the blue row is the same as the rightmost column of the red area and the green column is
the same as the top row of the red area. Note that the indices of the entries in the red fundamental area
are in bijection with the edges and diagonals of a regular $(n+3)$-gon (viewed as pairs of vertices), 
with vertices labelled 
$1,2,\ldots,n+3$. This means that we can view every tame 
frieze pattern with coefficients of height $n$ over $R$ as a map on the edges and
diagonals of a regular $(n+3)$-gon with values in $R$.
\begin{figure}
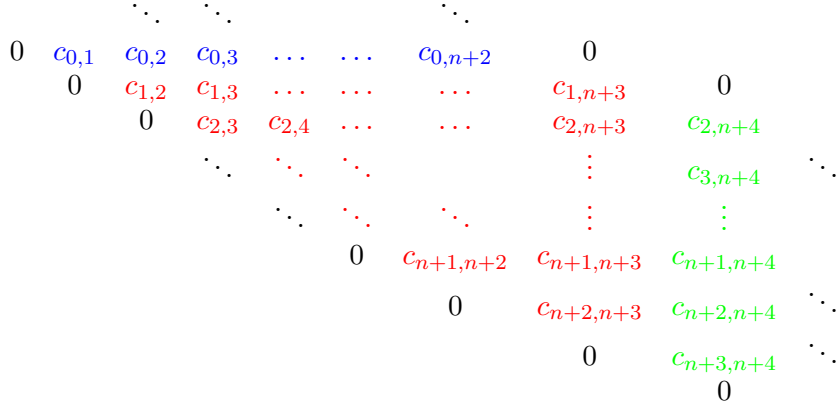

$$
\begin{array}{cccccccccc}
& ~~ & ~\ddots~ & ~\ddots~ & ~~ & ~~ & ~\ddots~ & ~~ & ~~ & \\ 
~0~& ~\blue{c_{0,1}}~ & ~\blue{c_{0,2}}~ & ~\blue{c_{0,3}}~ & ~\blue{\ldots}~ & ~\blue{\ldots}~ & ~\blue{c_{0,n+2}}~ & ~0~ & ~~ & \\ 
& ~0~ & ~\red{c_{1,2}}~ & ~\red{c_{1,3}}~ & ~\red{\ldots}~ & ~\red{\ldots}~ & ~\red{\ldots}~ & ~\red{c_{1,n+3}}~ & ~0~ & \\ 
& & ~0~ & ~\red{c_{2,3}}~ & ~\red{c_{2,4}}~ & ~\red{\ldots}~ & ~\red{\ldots}~ & ~\red{c_{2,n+3}}~ & ~\green{c_{2,n+4}}~ & \\
& &  & ~\ddots~ & ~\red{\ddots}~ & ~\red{\ddots}~ & ~~ & ~\red{\vdots}~ & ~\green{c_{3,n+4}}~ & ~\ddots~\\
& &  & ~~ & ~\ddots~ & ~\red{\ddots}~ & ~\red{\ddots}~ & ~\red{\vdots}~ & ~\green{\vdots}~ & ~~\\
& &  & ~~ & ~~ & ~0~ & ~\red{c_{n+1,n+2}}~ & ~\red{c_{n+1,n+3}}~ & ~\green{c_{n+1,n+4}}~ & ~~\\
& &  & ~~ & ~~ & ~~ & ~0~ & ~\red{c_{n+2,n+3}}~ & ~\green{c_{n+2,n+4}}~ & ~\ddots~\\
& &  & ~~ & ~~ & ~~ & ~~ & ~0~ & ~\green{c_{n+3,n+4}}~ & ~\ddots~\\
& &  & ~~ & ~~ & ~~ & ~~ & ~~ & ~0~ & ~~\\
\end{array}
$$
\caption{A fundamental domain for the glide symmetry of a frieze
with coefficients.\label{fig:glide}}
\end{figure}

\bigskip

\noindent
{\bf Convention:} 
We use the notion (tame) {\em frieze pattern with coefficients} for an infinite array 
as in Definition \ref{def:frieze} and the notion (tame) {\em frieze with coefficients} 
for a corresponding map from edges and diagonals of a regular polygon. 

\bigskip

The entries in a frieze (pattern) with coefficients are tightly connected by many remarkable
equations, in addition to the local conditions given in equations 
(\ref{eq:local}) in Definition \ref{def:frieze}. 

\begin{Definition} \label{def:ptolemy}
Let $\mathcal{C}=(c_{i,j})$ be a tame frieze with coefficients on a regular $m$-gon. 
For any indices $1\le i\le j\le k\le \ell\le m$
the corresponding Ptolemy relation is the equation
\begin{equation*}\tag{$E_{i,j,k,\ell}$}\label{eq:ptolemy}
c_{i,k} c_{j,\ell} = c_{i,\ell} c_{j,k} + c_{i,j} c_{k,\ell}
\end{equation*}
\end{Definition}

\begin{Remark} \label{rem:ptolemy}
\begin{enumerate}
    \item By Definition \ref{def:frieze}, the entries $c_{i,i}$ are zero for
    all $i\in \mathbb{Z}$. This implies that the equations (\ref{eq:ptolemy})
    always hold if there are equalities among the numbers $i,j,k,\ell$. 
    \item If $i< j< k< \ell$ then the Ptolemy relation (\ref{eq:ptolemy})
    can be visualized as follows 
    \begin{center}
  \begin{tikzpicture}[auto]
    \node[name=s, draw, shape=regular polygon, regular polygon sides=500, minimum size=4cm] {};
    \draw[thick] (s.corner 60) to (s.corner 180);
    \draw[thick] (s.corner 180) to (s.corner 300);
    \draw[thick] (s.corner 300) to (s.corner 400);
    \draw[thick] (s.corner 400) to (s.corner 60);
    \draw[thick] (s.corner 60) to (s.corner 300);
    \draw[thick] (s.corner 180) to (s.corner 400);
    
    \draw[shift=(s.corner 60)]  node[above]  {{\small $i$}};
    \draw[shift=(s.corner 180)]  node[left]  {{\small $j$}};
    \draw[shift=(s.corner 300)]  node[below]  {{\small $k$}};
    \draw[shift=(s.corner 400)]  node[right]  {{\small $\ell$}};
   \end{tikzpicture}
\end{center}
The Ptolemy relation asks that in the quadrilateral,
the product of the labels 
on the diagonals equals the sum of the products of labels of opposite
sides; cf.\ Ptolemy's theorem from Elementary Euclidean Geometry.
\item The local conditions for a frieze pattern from Definition \ref{def:frieze}
are a special case of Ptolemy relations. Namely, the local condition 
(\ref{eq:local}) is equal to the Ptolemy relation
$(E_{i,i+1,j,j+1})$ (use that $c_{i,i+1}=c_{i+1,n+i+3}$ by Remark \ref{rem:frieze}).
\end{enumerate}
\end{Remark}

\begin{Theorem} \label{thm:ptolemy}
Every tame frieze with coefficients
over some subset $R\subseteq \mathbb{C}$
satisfies all Ptolemy relations.
\end{Theorem}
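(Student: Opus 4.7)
The strategy is to realize every entry of the tame frieze as a scalar multiple of a $2\times 2$ determinant of vectors in $\mathbb{C}^2$; once this embedding is in place, the Ptolemy relation \eqref{eq:ptolemy} reduces to the classical three-term Pl\"ucker identity
\[
[a,c]\,[b,d] \,=\, [a,b]\,[c,d] + [a,d]\,[b,c]
\]
for four vectors $a,b,c,d$ in the plane, where $[x,y]$ denotes the $2\times 2$ determinant of the matrix with columns $x$ and $y$.

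Working inside the extended frieze pattern of Remark \ref{def:extfrieze}, I would define
\[
V_j \,:=\, \begin{pmatrix} c_{0,j} \\ c_{1,j} \end{pmatrix} \in \mathbb{C}^2 \qquad (j = 1,2,\ldots,n+3),
\]
and aim to establish the determinantal identity $c_{i,j} = [V_i,V_j]/d_0$ for all $1 \le i \le j \le n+3$. The key observation is the tautological factorization $P_{i-1}\cdot M_{i,j} = P_j$ of the $\emu$-matrix products, where $P_k := \prod_{\ell=1}^{k}\emu(c_{\ell-1},d_\ell,d_{\ell-1})$ and $M_{i,j}$ is the analogous product with range $i,\ldots,j$. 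Corollary \ref{cor:cij}\,(1) gives explicit closed forms for $P_{i-1}$, $M_{i,j}$ and $P_j$ as $2\times 2$ matrices whose entries are frieze entries divided by $d_0$ or $d_{i-1}$. Since $\det P_{i-1} = d_{i-1}/d_0 \neq 0$, one may invert and write $M_{i,j} = P_{i-1}^{-1}P_j$; reading off the $(1,1)$-entry of both sides yields the identity after a short computation. The cases $i=1$ and $i=j$ follow directly from the boundary conventions.

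Given the determinantal identity, the Ptolemy relation \eqref{eq:ptolemy} rewrites as
\[
[V_i,V_k]\,[V_j,V_\ell] \,=\, [V_i,V_j]\,[V_k,V_\ell] + [V_i,V_\ell]\,[V_j,V_k],
\]
which is precisely the Pl\"ucker relation above applied to $V_i,V_j,V_k,V_\ell \in \mathbb{C}^2$, and so holds identically. The main technical obstacle is the careful book-keeping in establishing the determinantal identity: Corollary \ref{cor:cij} carries several sign conventions and prefactors $1/d_k$ that must be tracked through the inversion of $P_{i-1}$ (which uses only $d_0 \ne 0$, guaranteed by Definition \ref{def:frieze}(ii)). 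One could equivalently phrase the entire argument inside the fundamental domain of Figure \ref{fig:glide} by using the glide symmetry of Theorem \ref{thm:glide} to rewrite $c_{0,j}$ and $c_{1,j}$ as entries with index pairs in $\{1,\ldots,n+3\}$, but this is purely cosmetic.
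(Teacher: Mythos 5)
Your proposal is correct, and it takes a genuinely different route from the paper's proof, although both arguments draw on the same source, namely Corollary \ref{cor:cij} and the multiplicativity of the products $M_{i,j}=\prod_{k=i}^{j}\emu(c_{k-1},d_k,d_{k-1})$. The paper works directly with the factorizations $M_{i+1,k}=M_{i+1,j}M_{j+1,k}$ etc., extracts three $(2,1)$-entry identities, and combines them by hand together with the local relation $(E_{j,k})$ to verify each Ptolemy relation; this is a self-contained but somewhat opaque computation. You instead isolate a single structural fact, the rank-two representation $d_0\,c_{i,j}=[V_i,V_j]$ with $V_j=(c_{0,j},c_{1,j})^T$, and then all Ptolemy relations follow simultaneously from the three-term Pl\"ucker identity. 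Your key identity does check out: writing $P_k=M_{1,k}$, Corollary \ref{cor:cij}(1) gives $\det P_{i-1}=d_{i-1}/d_0\neq 0$, and comparing the $(1,1)$-entries of $M_{i,j}=P_{i-1}^{-1}P_j$ yields exactly
\begin{equation*}
-\frac{c_{i,j}}{d_{i-1}}=\frac{1}{d_{i-1}d_0}\bigl(-c_{0,i}c_{1,j}+c_{1,i}c_{0,j}\bigr),
\end{equation*}
i.e.\ $d_0\,c_{i,j}=c_{0,i}c_{1,j}-c_{0,j}c_{1,i}$, valid for all $1\le i\le j\le n+3$ (the index ranges in Corollary \ref{cor:cij} cover everything needed, and the degenerate cases $i=j$ and $i=1$ are immediate). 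Dividing the Pl\"ucker relation $[V_i,V_k][V_j,V_\ell]=[V_i,V_j][V_k,V_\ell]+[V_i,V_\ell][V_j,V_k]$ by $d_0^2$ gives $(E_{i,j,k,\ell})$ with exactly the right signs. What your approach buys is conceptual clarity: it makes explicit that tameness forces the whole array to be ``of rank two,'' which is the real reason all Ptolemy relations hold, and it replaces the paper's page-long cancellation with a universal polynomial identity. The only thing to flesh out in a final write-up is the short computation establishing the determinantal identity, which you have correctly identified as the technical core.
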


\begin{proof}
Let $\mathcal{C}$ be a tame frieze on a regular $m$-gon, where $m\ge 3$.
Take any four vertices $1\le i\le j\le k\le \ell\le m$ of the regular $m$-gon.
By Corollary \ref{cor:cij} we have that
\[ M_{i+1,j} := \prod_{k=i+1}^{j} \emu(c_{k-1},d_{k},d_{k-1})
= \frac{1}{d_i} \begin{pmatrix} -c_{i+1,j} & -c_{i+1,j+1} \\ c_{i,j} & c_{i,j+1} \end{pmatrix}. \]
Using that $M_{i+1,k} = M_{i+1,j}M_{j+1,k}$, $M_{j+1,\ell} = M_{j+1,k}M_{k+1,\ell}$, 
and $M_{i+1,\ell} = M_{i+1,j} M_{j+1,k}M_{k+1,\ell}$ we get from the $(2,1)$-entries of the 
matrices that
\begin{eqnarray}
\label{me1} \frac{1}{d_i} c_{i,k} &=& \frac{1}{d_id_j}(c_{i,j+1} c_{j,k} - c_{j+1,k} c_{i,j}), \\
\label{me2} \frac{1}{d_j} c_{j,\ell} &=& \frac{1}{d_j d_k} ( c_{j,k+1} c_{k,l} - c_{k+1,\ell} c_{j,k}), \\
\label{me3} \frac{1}{d_i} c_{i,\ell} &=& \frac{1}{d_i d_j d_k} (c_{i,j+1} c_{j,k+1} c_{k,\ell} - c_{i,j+1} c_{k+1,\ell} c_{j,k} + c_{j+1,k} c_{k+1,\ell} c_{i,j} - c_{j+1,k+1} c_{i,j} c_{k,\ell}).
\end{eqnarray}
With these three equations and $(E_{j,k})$ we conclude:
\begin{eqnarray*}
&& \frac{1}{d_i d_j} (c_{i,j} c_{k,\ell} + c_{j,k} c_{i,\ell} - c_{i,k} c_{j,\ell}) \\
&\stackrel{(\ref{me1}), (\ref{me2})}{=}& \frac{1}{d_i d_j} (c_{i,j} c_{k,\ell} + c_{j,k} c_{i,\ell}) - \frac{1}{d_i d_j^2 d_k}(c_{i,j+1} c_{j,k} - c_{j+1,k} c_{i,j})( c_{j,k+1} c_{k,l} - c_{k+1,\ell} c_{j,k})\\
&\stackrel{(\ref{me3})}{=}& \frac{1}{d_i d_j} (c_{i,j} c_{k,\ell} + c_{j,k} c_{i,\ell}) - \frac{1}{d_i d_j^2 d_k}(c_{i,j+1} c_{j,k} - c_{j+1,k} c_{i,j})( c_{j,k+1} c_{k,l} - c_{k+1,\ell} c_{j,k}) + \\
&& \frac{1}{d_j} c_{j,k} (\frac{1}{d_i d_j d_k} \left(c_{i,j+1} c_{j,k+1} c_{k,\ell} - c_{i,j+1} c_{k+1,\ell} c_{j,k} + c_{j+1,k} c_{k+1,\ell} c_{i,j} - c_{j+1,k+1} c_{i,j} c_{k,\ell}\right)\\ && - \frac{1}{d_i} c_{i,\ell}) \\
&=& \frac{1}{d_i d_j} c_{i,j} c_{k,\ell} \left(1-\frac{1}{d_j d_k} \left(c_{j,k}c_{j+1,k+1} - c_{j,k+1} c_{j+1,k}\right)\right) \stackrel{(E_{j,k})}{=} 0.
\end{eqnarray*}
Hence we obtain $c_{i,j} c_{k,\ell} + c_{j,k} c_{i,\ell} = c_{i,k} c_{j,\ell}$,
that is, the Ptolemy relation $(E_{i,j,k,\ell})$ holds. 
\end{proof}

\begin{Remark}
We have now seen that a tame frieze pattern with coefficients as in Definition 
\ref{def:frieze} is basically the same as a map on edges and diagonals of a polygon
satisfying all Ptolemy relations. This means that indeed 
the local condition of a frieze pattern with
coefficients produces the cluster variables of a Ptolemy cluster algebra with coefficients; 
see for instance \cite[Section 1]{Schiffler} for a description of the Ptolemy cluster
algebra (aka cluster algebra of Dynkin type $A$ with non-trivial coefficients).
\end{Remark}


\section{Frieze patterns from subpolygons}
\label{sec:subpolygons}

In the previous section we have seen that a tame frieze pattern with coefficients
of height $n$ can be seen as a map from edges and diagonals of an $(n+3)$-gon
such that all Ptolemy relations are satisfied.

From now on we consider friezes with coefficients over positive integers. 
For classic friezes over $\mathbb{N}$ there is a beautiful combinatorial 
description via triangulations of polygons \cite{CC73}.
\smallskip

\noindent{\bf Convention:} From now on we use the notion {\em classic 
Conway-Coxeter frieze (pattern)} for a classic frieze (pattern) over the positive
integers $\mathbb{N}$.
\smallskip

Our aim is to connect the more general theory of friezes with coefficients
over $\mathbb{N}$ 
with the theory of classic Conway-Coxeter friezes. 

From the viewpoint of polygons there is an obvious way to obtain friezes
with coefficients from classic Conway-Coxeter friezes. Namely, take any classic Conway-Coxeter 
frieze $\mathcal{C}$; this is given as a map $f_{\mathcal{C}}$ from edges and 
diagonals on a polygon to $\mathbb{N}$. Now cut out any subpolygon
and restrict the map $f_{\mathcal{C}}$ to this subpolygon. Clearly, the restricted
map still satisfies all Ptolemy relations of the subpolygon, that is, 
the restriction yields a frieze with coefficients.

\begin{Example} \label{ex:1122_part1}
We consider the following triangulation of a hexagon:

\begin{center}
  \begin{tikzpicture}[auto]
    \node[name=s, draw, shape=regular polygon, regular polygon sides=6, minimum size=3cm] {};
    \draw[thick] (s.corner 2) to (s.corner 4);
    \draw[thick] (s.corner 2) to (s.corner 5);
    \draw[thick] (s.corner 2) to (s.corner 6);
   \end{tikzpicture}
\end{center}
The corresponding Conway-Coxeter frieze 
has for instance the following values on the subpolygon highlighted 
in red:

\begin{center}
  \begin{tikzpicture}[auto]
    \node[name=s, draw, shape=regular polygon, regular polygon sides=6, minimum size=3cm] {};
    \draw[dash dot,red] (s.corner 1) to (s.corner 3);
    \draw[red] (s.corner 3) to (s.corner 5);
    \draw[dash dot,red] (s.corner 2) to (s.corner 5);
    \draw[red] (s.corner 1) to (s.corner 5);
    \draw[red] (s.corner 1) to (s.corner 2);
    \draw[red] (s.corner 2) to (s.corner 3);
    
    \draw[shift=(s.side 1)]  node[above]  {{\small {\red 1}}};    
    \draw[shift=(s.corner 2)]  node[below left=12pt]  {{\small {\red 1}}};
    \draw[shift=(s.corner 4)]  node[above right=5pt]  {{\small {\red 2}}};
    \draw[shift=(s.corner 6)]  node[left=10pt]  {{\small {\red 2}}};
    \draw[shift=(s.corner 6)]  node[left=42pt]  {{\small {\red 1}}};
    \draw[shift=(s.corner 1)]  node[below left=15pt]  {{\small {\red 4}}};
   \end{tikzpicture}
\end{center}
This yields a frieze with coefficients with boundary sequence $(1,1,2,2)$,
and diagonal values $1,4$ as indicated in the figure. 
\end{Example}

\medskip
The fundamental question then is: which friezes with coefficient over $\mathbb{N}$ can be 
obtained from classic Conway-Coxeter friezes by cutting out subpolygons?
\medskip

The following result gives some restrictions for the smallest case
of triangles.

\begin{Lemma}\label{lem:gcd}
Let $\mathcal{C}=(c_{i,j})$ be a classic Conway-Coxeter frieze and $i \le j \le k$.
Then the greatest common divisor of any two of the numbers
$c_{i,j},c_{j,k},c_{i,k}$ divides the third number.
In particular, $\gcd(c_{i,j}, c_{j,k})=\gcd(c_{j,k}, c_{k,i})=\gcd(c_{i,j}, c_{k,i})$.
\end{Lemma}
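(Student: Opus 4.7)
The central tool is the Ptolemy relation from Theorem~\ref{thm:ptolemy}: because $\mathcal{C}$ is tame, it satisfies $(E_{i,j,k,\ell})$ for every quadruple $i \le j \le k \le \ell$. What makes the statement work in a classic Conway--Coxeter frieze is that all boundary entries equal $1$, so once a fourth vertex is inserted adjacent to one of $i,j,k$ in the polygon, the corresponding Ptolemy relation reduces to an integer linear combination that exposes divisibility.

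Concretely, to prove $\gcd(c_{i,j}, c_{j,k}) \mid c_{i,k}$, the plan is to apply the Ptolemy relation $(E_{i,j,j+1,k})$; using $c_{j,j+1}=1$ this rearranges to
\[
c_{i,k} \;=\; c_{i,j+1}\, c_{j,k} \;-\; c_{i,j}\, c_{j+1,k},
\]
so every common divisor of $c_{i,j}$ and $c_{j,k}$ divides $c_{i,k}$. A completely parallel computation with $(E_{i,i+1,j,k})$ (and $c_{i,i+1}=1$) gives
\[
c_{j,k} \;=\; c_{i,j}\, c_{i+1,k} \;-\; c_{i,k}\, c_{i+1,j},
\]
yielding $\gcd(c_{i,j}, c_{i,k}) \mid c_{j,k}$, and $(E_{i,j,k,k+1})$ (with $c_{k,k+1}=1$) gives
\[
c_{i,j} \;=\; c_{i,k}\, c_{j,k+1} \;-\; c_{i,k+1}\, c_{j,k},
\]
yielding $\gcd(c_{j,k}, c_{i,k}) \mid c_{i,j}$. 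The ``in particular'' statement is then automatic: each of the three gcds divides the remaining edge value of the triangle and therefore divides the gcd obtained by pairing that value with either of the other two edges; applying this observation in all three directions forces the three gcds to coincide.

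The only step requiring care is the bookkeeping of degenerate and boundary cases, which is where I expect the only real obstacle to lie. If two of $i,j,k$ coincide then one of the $c$-values is $0$ and the statement is vacuous; if two of them are polygon-adjacent, the corresponding $c$-value equals $1$ and the divisibility involving it is trivial. The third divisibility, where I use $\ell = k+1$, also needs the cyclic wrap-around to be handled: if $k$ is the largest vertex of the polygon then $k+1$ does not lie in the chosen linear labelling, and one must instead apply the Ptolemy relation in the cyclically shifted form $(E_{1,i,j,k})$ (using $c_{1,k}=1$ when $(1,k)$ is an edge of the polygon) or, equivalently, relabel vertices cyclically. No new ideas beyond Ptolemy and these indexing conventions are required.
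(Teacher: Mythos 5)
Your proposal is correct and uses essentially the same argument as the paper: insert a vertex adjacent to one of $i,j,k$, apply the corresponding Ptolemy relation, and use that the boundary entry equals $1$ to express one of the three values as an integer combination of the other two (the paper does this once with the quadruple $(i,j,k-1,k)$ and appeals to symmetry, while you write out all three instances and also note the degenerate and wrap-around cases, which the paper likewise treats explicitly).
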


\begin{proof}
By symmetry and relabelling it suffices to show 
that $\gcd(c_{j,k},c_{i,k})$ divides $c_{i,j}$.

If $i=j$ then the assertion holds since $c_{i,i}=0$. If $j=k$ then 
$\gcd(c_{j,k},c_{i,k})=c_{i,k}=c_{i,j}$ and the claim follows. If $j=k-1$ then
$\gcd(c_{j,k},c_{i,k})=\gcd(1,c_{i,k})=1$ which clearly divides $c_{i,j}$. 

So from now on we can assume that $i<j<k-1$. Then the Ptolemy relation
for the crossing diagonals $(i,k-1)$ and $(j,k)$ implies that
$$c_{i,k-1}c_{j,k} = c_{j,k-1}c_{i,k} + c_{i,j}c_{k-1,k}
= c_{j,k-1}c_{i,k} + c_{i,j} .
$$
Therefore, any common divisor of $c_{j,k}$ and $c_{i,k}$
divides $c_{i,j}$ and the claim follows.
\end{proof}

\begin{Example} \label{ex:1122_part2}
\begin{enumerate}
    \item We have seen in Example \ref{ex:frieze} that any frieze with 
coefficients of height 0 over $\mathbb{N}$ is given by three 
numbers $a,b,c$; these are the values attached to the edges of 
the corresponding triangle. Lemma \ref{lem:gcd} implies that
such a triangle can only be cut out of a classic Conway-Coxeter frieze 
if the greatest common divisor of two of $a,b,c$ divides the third. 
For instance, a triangle with values $1,2,2$ can not come from
a classic Conway-Coxeter frieze. 

In the next section we will consider triangles in more detail and 
will obtain a complete characterisation for which triangles with
triples $a,b,c$ can
be cut out of a classic Conway-Coxeter frieze.
\item Consider the following frieze with coefficients on a square:

\begin{center}
  \begin{tikzpicture}[auto]
    \node[name=s, draw, shape=regular polygon, regular polygon sides=4, minimum size=3cm] {};
    \draw[thick] (s.corner 1) to (s.corner 3);
    \draw[thick] (s.corner 2) to (s.corner 4);
    
    \draw[shift=(s.side 1)]  node[above]  {{\small {\blue 1}}};    
    \draw[shift=(s.side 2)]  node[left]  {{\small {\blue 1}}};
    \draw[shift=(s.side 3)]  node[below]  {{\small {\blue 2}}};    
    \draw[shift=(s.side 4)]  node[right]  {{\small {\blue 2}}};
    \draw[shift=(s.side 3)]  node[above left=7pt]  {{\small {\blue 1}}};    
    \draw[shift=(s.side 1)]  node[below left=7pt]  {{\small {\blue 4}}};
   \end{tikzpicture}
\end{center}

This square can not be cut out of a classic Conway-Coxeter frieze because it
contains a triangle with values $1,2,2$, which can not come from a classic 
Conway-Coxeter frieze by part (1).
\end{enumerate}
\end{Example}

We have seen in Examples \ref{ex:1122_part1} and \ref{ex:1122_part2}
that whether a frieze with coefficients can be cut out of a classic Conway-Coxeter
frieze does not only depend on the boundary sequence. 
\medskip

The next result shows that triangles can be cut out of a classic Conway-Coxeter
frieze if one of the values on the edges is 1 and the other two values are coprime. 
Note that the coprimeness condition has to be satisfied in this case by Lemma \ref{lem:gcd}. The proof
of the lemma exhibits a close connection to the Euclidean Algorithm.

\begin{Lemma}\label{accordion}
Let $a,b\in \mathbb{Z}_{\ge 0}$ with $\gcd(a,b)=1$. Then there exists a classic Conway-Coxeter frieze $\mathcal{C}=(c_{i,j})$ and a $k\ge 1$
such that $c_{1,k}=a$, $c_{k,k+1}=1$, and $c_{1,k+1}=b$.
\end{Lemma}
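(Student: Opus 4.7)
The plan is to prove the lemma by strong induction on $a+b$, where each inductive step implements one subtraction step of the slow Euclidean algorithm $(a,b)\mapsto(a-b,b)$, realized geometrically by inserting a single new vertex into the triangulated polygon of a smaller classic Conway-Coxeter frieze.

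The base cases with $a+b\le 2$ are handled directly: for $a=b=1$ take the trivial classic Conway-Coxeter frieze on the triangle ($3$-gon) with $k=2$, while the degenerate cases $a=0$ or $b=0$ (which by coprimality force the other value to be $1$) are handled by choosing $k=1$, respectively $k=n+3$, in any classic Conway-Coxeter frieze, using that $c_{1,1}=0$ by Definition \ref{def:frieze} and $c_{1,n+4}=0$ from Remark \ref{def:extfrieze}.

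For the inductive step, assume $a,b\ge 1$ are coprime with $a+b>2$, so $a\ne b$. Without loss of generality $a>b$ (the case $a<b$ is symmetric, with the pair $(a,b-a)$ used instead and a minor adjustment to the index $k$). Applying the induction hypothesis to the coprime pair $(a-b,b)$, whose sum is strictly smaller, yields a classic Conway-Coxeter frieze $\mathcal{C}'$ on an $(n'+3)$-gon and some $k'\ge 1$ with $c'_{1,k'}=a-b$ and $c'_{1,k'+1}=b$. I construct $\mathcal{C}$ from $\mathcal{C}'$ by inserting a new vertex $v$ on the edge between vertices $k'$ and $k'+1$ of the polygon of $\mathcal{C}'$ and adjoining the single new triangle $(k',v,k'+1)$; after relabeling so that $v$ becomes the new vertex $k'+1$ and the former vertex $k'+1$ becomes $k'+2$, the result is a valid triangulation of an $(n'+4)$-gon and hence gives a classic Conway-Coxeter frieze $\mathcal{C}$.

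The key verification uses Ptolemy's relation (Theorem \ref{thm:ptolemy}) for the quadrilateral on vertices $1,k',k'+1,k'+2$ in the new polygon:
\[
c_{1,k'+1}\cdot c_{k',k'+2}=c_{1,k'}\cdot c_{k'+1,k'+2}+c_{1,k'+2}\cdot c_{k',k'+1}.
\]
Here $c_{k',k'+1}=c_{k'+1,k'+2}=1$ are boundary edges of the new polygon, and $c_{k',k'+2}=1$ is the quiddity at the newly inserted vertex $k'+1$, which lies in exactly one triangle. The diagonal value $c_{1,k'}=a-b$ is inherited unchanged from $\mathcal{C}'$ since by the propagation recurrence of Proposition \ref{propagation_rev} the row-$1$ entry $c_{1,k'}$ depends only on the quiddity values $a_2,\dots,a_{k'-1}$, which are unchanged by the insertion; a short recurrence computation, using that the new quiddity at $k'+1$ is $1$, then also gives $c_{1,k'+2}=c'_{1,k'+1}=b$. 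Substituting into the Ptolemy relation yields $c_{1,k'+1}=(a-b)+b=a$, so taking $k=k'+1$ produces the required triangle with labels $(a,1,b)$. The main obstacle is precisely this bookkeeping of how vertex labels and diagonal values behave under vertex insertion; once it is in place, each inductive step implements exactly one Euclidean subtraction, and the desired ``accordion'' frieze is built up iteratively from the triangle.
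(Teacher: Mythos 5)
Your proof is correct, and it ultimately builds the same object as the paper's proof --- the ``accordion'' triangulation encoding the Euclidean algorithm --- but by a genuinely different argument. The paper runs the division-with-remainder Euclidean algorithm once, constructs the entire zigzag triangulation from the quotients $q_1,\dots,q_{\ell+1}$ in a single step, and then reads off the labels $c_{1,j}$ using the Broline--Crowe--Isaacs vertex-labelling algorithm of \cite{BCI}, identifying them with the successive remainders $r_i$. You instead run the subtractive Euclidean algorithm one step at a time as a strong induction on $a+b$, realize each subtraction as the insertion of a single ear into the polygon, and verify the new label by the single Ptolemy relation $(E_{1,k',k'+1,k'+2})$ together with the invariance of labels on diagonals between old vertices. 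Your route stays entirely inside the machinery already developed in Sections \ref{sec:definition} and \ref{sec:ptolemy} (Proposition \ref{propagation_rev} and Theorem \ref{thm:ptolemy}) and avoids any appeal to the BCI counting algorithm, at the cost of more per-step bookkeeping; the paper's route is non-inductive and makes the link to the continued-fraction expansion of $a/b$ explicit. The one point you should spell out is the ``short recurrence computation'' for $c_{1,k'+2}$: the insertion raises the quiddity at $k'$ by one (while leaving those at $2,\dots,k'-1$ untouched and giving the new vertex quiddity $1$), so what is actually needed is the continuant identity $K(a_2,\dots,a_{k'}+1,1)=K(a_2,\dots,a_{k'})$, which follows from the three-term recurrence for the products of $\emu$-matrices in Corollary \ref{cor:cij}. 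With that identity stated, the induction closes correctly, including your treatment of the degenerate base cases $(0,1)$ and $(1,0)$ (where, as a trivial aside, the vanishing $c_{1,n+4}=0$ is already part of Definition \ref{def:frieze} rather than of Remark \ref{def:extfrieze}).
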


\begin{proof}
Note that by assumption not both of $a,b$ can be zero, since $\gcd(0,0)=0$. 
If one of them is zero, say $a=0$, then $b=\gcd(0,b)=\gcd(a,b)=1$ and the assertion holds with $k=1$; in fact, use the unique classic Conway-Coxeter frieze
on a triangle.

So from now on we can assume that $a,b\in \mathbb{N}$. We show how the 
Euclidean Algorithm leads to a triangulation of a polygon, and hence
to a classic Conway-Coxeter frieze, with the required values.

First we perform the Euclidean Algorithm, where we can assume that $a\ge b$.
Set $r_0=a$ and $r_1=b$, then
\begin{eqnarray*}
a & = & q_1 b + r_2\\
b & = & q_2r_2 + r_3\\
r_2 & = & q_3r_3+r_4\\
 & \vdots & \\
r_{\ell-2} & = & q_{\ell-1} r_{\ell-1} + r_{\ell} \\
r_{\ell-1} & = & q_{\ell} r_{\ell} + r_{\ell+1} \\
r_{\ell} & = & q_{\ell+1}r_{\ell+1} + 0
\end{eqnarray*}
Note that by assumption we have $r_{\ell+1} = \gcd(a,b)=1$ and
$q_{\ell+1}=r_{\ell}$.

We now describe how to obtain from this data a suitable triangulation of 
a polygon. 

Consider the usual number line, and the integral points as vertices. 
Start with the edge connecting $1$ and $2$. Then draw $q_{\ell+1}$ 
arcs from 2 to the left to the next available vertices, i.e. to the vertices $0,-1,\ldots, -q_{\ell+1}+1$. Then from vertex $-q_{\ell+1}+1$ draw $q_{\ell}$
arcs to the right to the next available vertices. Continue 
in this alternating fashion by drawing $q_{\ell-1},q_{\ell-2},\ldots,
q_3,q_2,q_1$ arcs to the left and right, respectively. 

This gives a triangulation of a polygon whose vertices are the integers
which are attached to one of the arcs described above. (The rest of the
number line is now disregarded.) Note that this polygon is an
$m:=(2+\sum_{j=1}^{\ell+1} q_j)$-gon. We keep the label for vertex 1, but 
then number the vertices of this polygon as usual consecutively
by $1,2,\ldots,m$.

We claim that the classic Conway-Coxeter frieze to this triangulation
of the $m$-gon has the desired properties.
For this, we use a well-known combinatorial algorithm for computing
arbitrary frieze entries from the triangulation, see \cite{BCI}, starting
from the vertex $1$. The values attached to the vertices by this
algorithm are then the
frieze entries $c_{1,j}$. 

The endpoint of the last of $q_{\ell+1}$ arcs to the left gets the label
$q_{\ell+1}=r_{\ell}$. Then the endpoints of the $q_{\ell}$
arcs to the right get the labels $r_{\ell}+1, 2r_{\ell}+1,\ldots,
q_{\ell} r_{\ell}+1= r_{\ell-1}$. Then the last of the endpoints of the 
$q_{\ell-1}$ arcs to the left gets the label 
$q_{\ell-1}r_{\ell-1}+r_{\ell} = r_{\ell-2}$. Eventually, we get two
consecutive vertices $k+1$ and $k$ in the polygon which get assigned the
values $r_1=b$ and $r_0=a$. 

This means that we have constructed a classic Conway-Coxeter frieze 
$\mathcal{C} = (c_{i,j})$ with $c_{k,k+1}=1$, $c_{1,k}=a$ and $c_{1,k+1}=b$. 
\end{proof}

\section{Triangles in Conway-Coxeter friezes}
\label{sec:triangles}

We have seen in Section \ref{sec:subpolygons} that cutting out any subpolygon 
of a classic Conway-Coxeter frieze yields a frieze with coefficients over 
$\mathbb{N}$. 
In this section, we give an explicit description of all possible values on 
the edges of triangles cut out of classic Conway-Coxeter friezes.
Recall that we have already obtained some results on triples 
$(a,b,c)$ appearing as labels of a triangle cut out of a classic Conway-Coxeter frieze.
Namely, the greatest common divisor of any two of the numbers divides the
third (see Lemma \ref{lem:gcd}), and for any pair $a,b$ of coprime 
natural numbers, the triple $(a,1,b)$ appears as a triangle in some classic 
Conway-Coxeter frieze (see Lemma \ref{accordion}). 

However, the complete classification 
of possible triples is rather subtle and will be the main topic of this
section; see Theorem \ref{mres} below for a precise statement of our 
main result.

The following auxiliary result shows that given three vertices of an 
$n$-gon, any triangulation of the $n$-gon
contains a triangle which separates the given points. 

\begin{Lemma}\label{inscribed_triangle}
Let $\mathcal{C}=(c_{i,j})$ be a classic Conway-Coxeter frieze on an $n$-gon. 
If $1\le i < j < k \le n$, then there exist $i',j',k'$ with $i\le i' \le j \le j' \le k \le k'$ or $k'\le i\le i' \le j \le j' \le k$ such that
\[ c_{i',j'} = c_{j',k'} = c_{k',i'} = 1. \]
\end{Lemma}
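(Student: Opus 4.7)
First observe that in a classic Conway-Coxeter frieze attached to a triangulated polygon, an entry $c_{a,b}$ equals $1$ precisely when $\{a,b\}$ is a polygon edge or a diagonal of the triangulation. Hence the three conditions $c_{i',j'}=c_{j',k'}=c_{k',i'}=1$ say exactly that $\{i',j',k'\}$ is a triangle of the triangulation. The two inequality variants in the conclusion express that, reading labels cyclically around the polygon, the vertices $i',j',k'$ can be placed one in each of the three closed cyclic arcs $[i,j]$, $[j,k]$, $[k,i]$ determined by $i,j,k$, where $[k,i]$ wraps through $n$. The lemma thus reduces to: every triangulation of a convex polygon contains a triangle with one vertex in each of the three closed cyclic arcs determined by any chosen triple $i<j<k$ of vertices.

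I would prove this by induction on $n$, analysing the fan of triangles at the vertex $i$. List the neighbours of $i$ in the triangulation as $n_0=i+1, n_1,\ldots,n_s=i-1$, in cyclic polygon order along the arc from $i+1$ forward through $j,k$ to $i-1$. In Case A, some neighbour $n_\ell$ lies in the closed arc $[j,k]$; choosing $\ell$ minimal, the fan triangle $\{i,n_{\ell-1},n_\ell\}$ satisfies $n_{\ell-1}\in[i,j]$, $n_\ell\in[j,k]$, $i\in[k,i]$, yielding the required triangle (with a minor separate treatment when $\ell=0$, i.e.\ when $j=i+1$). In Case B, no neighbour of $i$ lies in $[j,k]$, so there are consecutive neighbours $n_{\ell^*}<j$ and $n_{\ell^*+1}>k$; the diagonal $d=\{n_{\ell^*},n_{\ell^*+1}\}$ cuts off a strict sub-polygon $Q$ containing both $j$ and $k$ but not $i$. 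Since $d$ has label $1$, the restriction of the frieze to $Q$ is again a classic Conway-Coxeter frieze, and the inductive hypothesis applied inside $Q$ with an auxiliary triple of distinguished vertices (say $(n_{\ell^*},j,k)$) produces an interleaving triangle of $Q$ which lifts to the required triangle in the original polygon.

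The main obstacle is Case B, because the cut-off diagonal causes the $Q$-arc from $k$ back to $n_{\ell^*}$ to straddle two distinct original arcs: its bulk $\{k,k+1,\ldots,n_{\ell^*+1}\}$ lies in the original arc $[k,i]$, but its lone boundary endpoint $n_{\ell^*}$ lies in $[i,j]$. A recursively obtained $Q$-interleaving triangle might thus choose $n_{\ell^*}$ as its representative for this arc and thereby land in the wrong original arc. I would resolve this either by strengthening the inductive hypothesis -- for example, guaranteeing that the interleaving triangle produced avoids using $n_{\ell^*}$ in that arc-slot whenever an alternative exists -- or by running the induction also with the complementary triple $(j,k,n_{\ell^*+1})$ and showing that at least one of the two recursive outcomes translates correctly back to the original arcs. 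Alternatively, a sub-induction on the fan of $j$ (or $k$) inside $Q$ mirrors the original Case A / Case B analysis and ultimately terminates with a triangle that interleaves in the original polygon.
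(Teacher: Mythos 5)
Your reduction to a purely combinatorial statement about triangulations and your Case A are sound, but Case B is a genuine gap rather than a loose end --- and it is exactly the crux of the lemma. The inductive hypothesis you would invoke inside $Q$ is purely existential, so you have no control over whether the triangle it supplies for the triple $(n_{\ell^*},j,k)$ uses $n_{\ell^*}$ as the representative of the arc that straddles the cut, and none of your three remedies is carried out. In particular the ``run both triples'' idea does not follow from the induction as stated: applying the hypothesis to $(n_{\ell^*},j,k)$ and to $(j,k,n_{\ell^*+1})$ gives two independent existence statements, each of which could be witnessed by a bad triangle (one using $n_{\ell^*}$, the other using $n_{\ell^*+1}$, in the straddling slot), and the disjunction you actually need --- that at least one of the two calls can be made to return a triangle that transfers to the original arcs --- is strictly stronger than the conjunction of the two inductive instances. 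The suggested ``strengthened hypothesis'' (avoid $n_{\ell^*}$ whenever an alternative exists) is not formulated precisely enough to be verified in the inductive step. So the proof is incomplete at its decisive point.

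The gap is repairable, and the natural repair essentially recovers the paper's argument. Instead of recursing on the full statement with a shifted triple, consider the triangle of the triangulation adjacent to the cut diagonal $d=\{n_{\ell^*},n_{\ell^*+1}\}$ on the $Q$-side, with apex $w$: if $w\in[j,k]$ you are done, since $\{n_{\ell^*},w,n_{\ell^*+1}\}$ has one vertex in each of $[i,j]$, $[j,k]$, $[k,i]$; if $w\in(n_{\ell^*},j)$ or $w\in(k,n_{\ell^*+1})$, replace $d$ by $\{w,n_{\ell^*+1}\}$, respectively $\{n_{\ell^*},w\}$ --- again a diagonal of the triangulation with one endpoint in $(i,j)$ and one in $(k,i)$, cutting off a strictly smaller subpolygon containing $[j,k]$ --- and repeat; the descent must terminate in the first case. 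The paper packages this without any induction: among the diagonals of the triangulation separating one of the three vertices from the other two (these are pairwise noncrossing, hence nested), it chooses one closest to the chord through the other two vertices, and minimality forces the apex of the adjacent triangle on that side to lie in the arc between them, with a short separate remark for the case $j=i+1$. Your fan-at-$i$ setup is a legitimate alternative skeleton, but as submitted it does not yet prove the lemma.
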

\begin{proof}
Let $\mathcal{T}$ be the triangulation of an $n$-gon corresponding
to the classic Conway-Coxeter frieze $\mathcal{C}$. 
Any diagonal of $\mathcal{T}$
divides the $n$-gon into two subpolygons. 

If each diagonal of $\mathcal{T}$ has the property that 
the three vertices $i,j,k$ are contained in one of the two
subpolygons then the triangle given by $i,j,k$ is a triangle 
of the triangulation $\mathcal{T}$ and we are done
by choosing $i=i'$, $j=j'$ and $k=k'$. 

So we can choose a diagonal $(u,v)$ of $\mathcal{T}$ which separates
the three vertices. By symmetry and possibly relabelling the vertices
we can assume that we have the situation as given in the following figure:

\begin{center}
  \begin{tikzpicture}[auto]
    \node[name=s, draw, shape=regular polygon, regular polygon sides=500, minimum size=3cm] {};
    \draw[thick] (s.corner 40) to (s.corner 300);
    
    \draw[shift=(s.corner 40)]  node[above]  {{\small $u$}};
    \draw[shift=(s.corner 300)]  node[below]  {{\small $v$}};
    \draw[shift=(s.corner 90)]  node[left]  {{\small $i$}};
    \draw[shift=(s.corner 240)]  node[below]  {{\small $j$}};
    \draw[shift=(s.corner 400)]  node[right]  {{\small $k$}};
   \end{tikzpicture}
\end{center}
Moreover, we choose the diagonal $(u,v)\in \mathcal{T}$
such that $|i-u|$ and $|v-j|$
are minimal. That is, of all the diagonals of $\mathcal{T}$
separating the three vertices in this way 
we choose the one closest to the 
diagonal $(i,j)$. Note that the latter need not be in 
$\mathcal{T}$, but the cases $i=u$ and $j=v$ are allowed. 

If $j\neq i+1$ then the diagonal $(u,v)$ of $\mathcal{T}$ is
part of a triangle in the left subpolygon. By minimality of
$|i-u|$ and $|j-v|$ the third endpoint $w$ of this triangle 
must satisfy $i+1\le w \le j-1$. Now set $i'=w$, $j'=v$,
$k'=u$, and we are done. 

This leaves us with the case that $j=i+1$. But then the edge
$(i,i+1)$ is part of a triangle of $\mathcal{T}$ and it is 
not hard to check that the assertion of the lemma holds in this 
case. 
\end{proof}

We now come to a crucial reformulation of the problem of describing
the triangles which appear as subpolygons in classic Conway-Coxeter friezes. 
Namely, we are going to show that such triangles correspond 
(but not bijectively!)\ to certain tuples of coprime pairs of numbers.

To understand the following explanations, 
Figure \ref{fig_triangle} might be helpful.
In particular, the Ptolemy relations required in the proof of Proposition \ref{prop:Delta1} are easy to read from the 
picture.

\begin{Definition}  \label{def:S}
Let
\[ S:=\{ (a_1,a_2,b_1,b_2,c_1,c_2)\in \ZZ^6 \mid \gcd(a_1,a_2)=\gcd(b_1,b_2)=\gcd(c_1,c_2)=1 \}\]
and $S_{\ge 0}:=S\cap \ZZ_{\ge 0}^6$.
Moreover, let $\Delta : S \rightarrow \ZZ^3$ be the map
\[ \Delta((a_1,a_2,b_1,b_2,c_1,c_2)):=(b_1 c_1+b_1 c_2+b_2 c_2, a_1 c_1+a_2 c_1+a_2 c_2, a_1 b_1+a_1 b_2+a_2 b_2).\]
\end{Definition}

\begin{figure}
\begingroup%
  \makeatletter%
  \providecommand\color[2][]{%
    \errmessage{(Inkscape) Color is used for the text in Inkscape, but the package 'color.sty' is not loaded}%
    \renewcommand\color[2][]{}%
  }%
  \providecommand\transparent[1]{%
    \errmessage{(Inkscape) Transparency is used (non-zero) for the text in Inkscape, but the package 'transparent.sty' is not loaded}%
    \renewcommand\transparent[1]{}%
  }%
  \providecommand\rotatebox[2]{#2}%
  \newcommand*\fsize{\dimexpr\f@size pt\relax}%
  \newcommand*\lineheight[1]{\fontsize{\fsize}{#1\fsize}\selectfont}%
  \ifx\svgwidth\undefined%
    \setlength{\unitlength}{330.16299717bp}%
    \ifx\svgscale\undefined%
      \relax%
    \else%
      \setlength{\unitlength}{\unitlength * \real{\svgscale}}%
    \fi%
  \else%
    \setlength{\unitlength}{\svgwidth}%
  \fi%
  \global\let\svgwidth\undefined%
  \global\let\svgscale\undefined%
  \makeatother%
  \begin{picture}(1,1.07961399)%
    \lineheight{1}%
    \setlength\tabcolsep{0pt}%
    \put(0,0){\includegraphics[width=\unitlength,page=1]{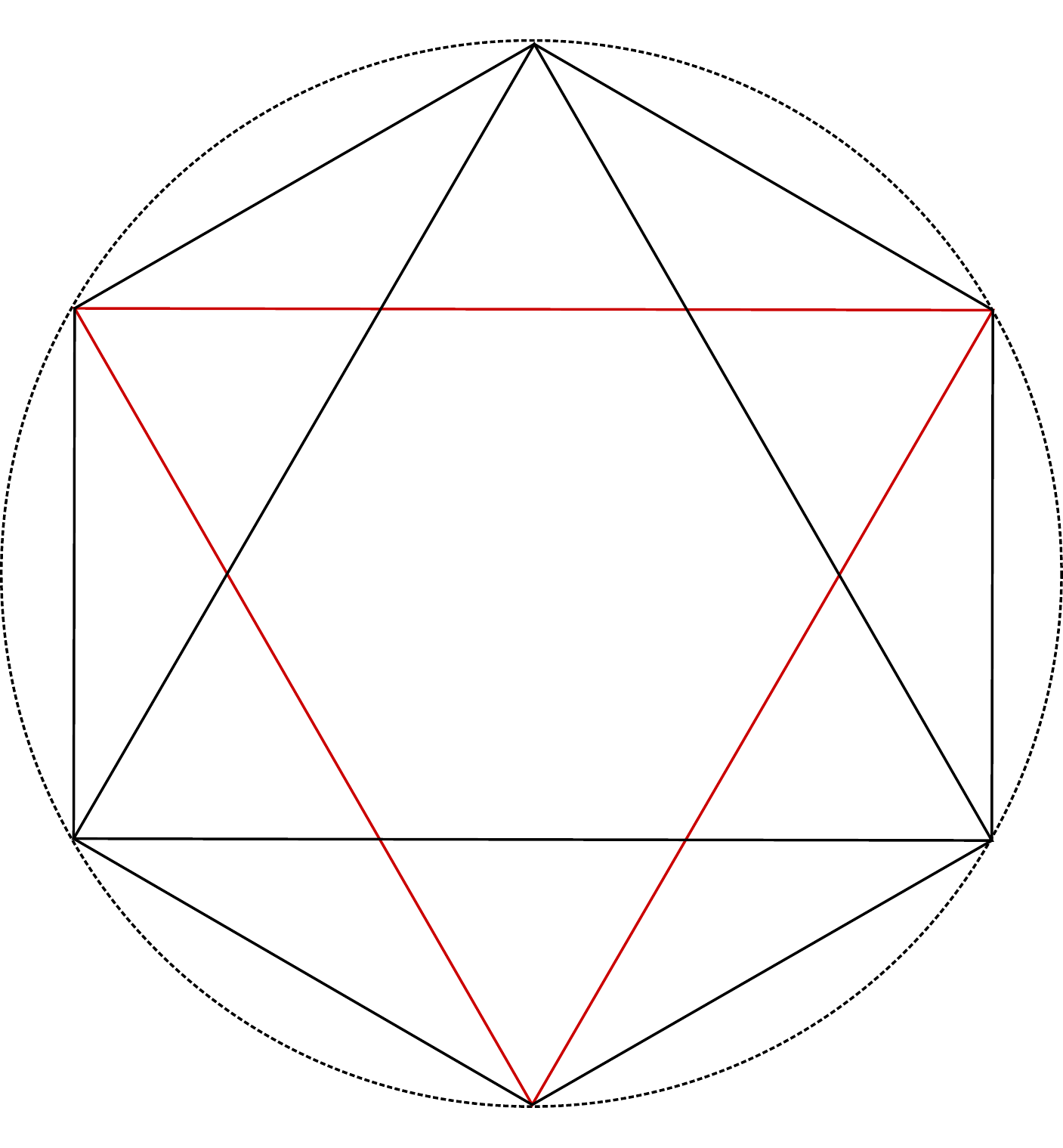}}%
    \put(0.0357507,0.26840538){\color[rgb]{0,0,0}\makebox(0,0)[lt]{\lineheight{1.25}\smash{\begin{tabular}[t]{l}$i$\end{tabular}}}}%
    \put(0.49137077,1.05711148){\color[rgb]{0,0,0}\makebox(0,0)[lt]{\lineheight{1.25}\smash{\begin{tabular}[t]{l}$j$\end{tabular}}}}%
    \put(0.94353929,0.26667969){\color[rgb]{0,0,0}\makebox(0,0)[lt]{\lineheight{1.25}\smash{\begin{tabular}[t]{l}$k$\end{tabular}}}}%
    \put(0.16173652,0.42200456){\color[rgb]{0,0,0}\makebox(0,0)[lt]{\lineheight{1.25}\smash{\begin{tabular}[t]{l}$a$\end{tabular}}}}%
    \put(0.56730746,0.8655439){\color[rgb]{0,0,0}\makebox(0,0)[lt]{\lineheight{1.25}\smash{\begin{tabular}[t]{l}$b$\end{tabular}}}}%
    \put(0.7623267,0.30119621){\color[rgb]{0,0,0}\makebox(0,0)[lt]{\lineheight{1.25}\smash{\begin{tabular}[t]{l}$c$\end{tabular}}}}%
    \put(0.6812126,0.17693635){\color[rgb]{0,0,0}\makebox(0,0)[lt]{\lineheight{1.25}\smash{\begin{tabular}[t]{l}$a_1$\end{tabular}}}}%
    \put(0.89003838,0.52382886){\color[rgb]{0,0,0}\makebox(0,0)[lt]{\lineheight{1.25}\smash{\begin{tabular}[t]{l}$a_2$\end{tabular}}}}%
    \put(0.7071001,0.88107646){\color[rgb]{0,0,0}\makebox(0,0)[lt]{\lineheight{1.25}\smash{\begin{tabular}[t]{l}$c_1$\end{tabular}}}}%
    \put(0.28427079,0.88452812){\color[rgb]{0,0,0}\makebox(0,0)[lt]{\lineheight{1.25}\smash{\begin{tabular}[t]{l}$c_2$\end{tabular}}}}%
    \put(0.0754449,0.53763551){\color[rgb]{0,0,0}\makebox(0,0)[lt]{\lineheight{1.25}\smash{\begin{tabular}[t]{l}$b_1$\end{tabular}}}}%
    \put(0.27046414,0.18038796){\color[rgb]{0,0,0}\makebox(0,0)[lt]{\lineheight{1.25}\smash{\begin{tabular}[t]{l}$b_2$\end{tabular}}}}%
    \put(0,0){\includegraphics[width=\unitlength,page=2]{triangle_generic.pdf}}%
    \put(0.3274166,0.6291047){\color[rgb]{0,0,0}\makebox(0,0)[lt]{\lineheight{1.25}\smash{\begin{tabular}[t]{l}$a_1+a_2$\end{tabular}}}}%
    \put(0.60700166,0.62047552){\color[rgb]{0,0,0}\makebox(0,0)[lt]{\lineheight{1.25}\smash{\begin{tabular}[t]{l}$b_1+b_2$\end{tabular}}}}%
    \put(0.42406327,0.38058461){\color[rgb]{0,0,0}\makebox(0,0)[lt]{\lineheight{1.25}\smash{\begin{tabular}[t]{l}$c_1+c_2$\end{tabular}}}}%
    \put(0.42578913,0.75854225){\color[rgb]{0,0,0}\makebox(0,0)[lt]{\lineheight{1.25}\smash{\begin{tabular}[t]{l}$1$\end{tabular}}}}%
    \put(0.71745492,0.46687634){\color[rgb]{0,0,0}\makebox(0,0)[lt]{\lineheight{1.25}\smash{\begin{tabular}[t]{l}$1$\end{tabular}}}}%
    \put(0.30670657,0.40129469){\color[rgb]{0,0,0}\makebox(0,0)[lt]{\lineheight{1.25}\smash{\begin{tabular}[t]{l}$1$\end{tabular}}}}%
    \put(0.02712151,0.7878813){\color[rgb]{0,0,0}\makebox(0,0)[lt]{\lineheight{1.25}\smash{\begin{tabular}[t]{l}$i'$\end{tabular}}}}%
    \put(0.93836177,0.79305893){\color[rgb]{0,0,0}\makebox(0,0)[lt]{\lineheight{1.25}\smash{\begin{tabular}[t]{l}$j'$\end{tabular}}}}%
    \put(0.48791916,0.00435299){\color[rgb]{0,0,0}\makebox(0,0)[lt]{\lineheight{1.25}\smash{\begin{tabular}[t]{l}$k'$\end{tabular}}}}%
  \end{picture}%
\endgroup%
\caption{A triangle $(a,b,c)$ and its surroundings.\label{fig_triangle}}
\end{figure}

The following result explains the relevance of the sets and maps in Definition 
\ref{def:S} for our purposes, namely every triple of labels of a
triangle cut out of a classic Conway-Coxeter
frieze is in the image of the map $\Delta$.

\begin{Proposition} \label{prop:Delta1}
Let $\mathcal{C}=(c_{i,j})$ be a classic Conway-Coxeter frieze. 
If $i < j < k$,
then there exists a tuple $(a_1,a_2,b_1,b_2,c_1,c_2) \in S_{\ge 0}$ such that
$(c_{i,j}, c_{j,k}, c_{k,i}) = \Delta((a_1,a_2,b_1,b_2,c_1,c_2))$.
\end{Proposition}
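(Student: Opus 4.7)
The plan is to use Lemma \ref{inscribed_triangle} to locate a ``unit triangle'' $i',j',k'$ of the triangulation that sits inside the triangle $(i,j,k)$, and then to expand the three target labels $c_{i,j}, c_{j,k}, c_{k,i}$ by repeated application of the Ptolemy relations from Theorem \ref{thm:ptolemy}. Concretely, by Lemma \ref{inscribed_triangle} there exist $i',j',k'$ with $i\le i' \le j \le j' \le k \le k'$ (the other case of that lemma is handled by cyclic relabelling) and $c_{i',j'}=c_{j',k'}=c_{k',i'}=1$. Set
\[
a_1:=c_{k,k'},\ a_2:=c_{k,j'},\ b_1:=c_{i,i'},\ b_2:=c_{i,k'},\ c_1:=c_{j,j'},\ c_2:=c_{j,i'},
\]
all of which are non-negative integers.

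Next, I apply Ptolemy to the quadrilateral $\{i,i',j',k'\}$; since three of its six pairwise labels equal $1$, it collapses to $c_{i,j'}=c_{i,i'}+c_{i,k'}=b_1+b_2$. Analogously Ptolemy on $\{j,j',k',i'\}$ and $\{k,k',i',j'\}$ yields $c_{j,k'}=c_1+c_2$ and $c_{k,i'}=a_1+a_2$. Feeding these identities into Ptolemy applied to the quadrilateral $\{i,j,j',k'\}$ (with $c_{j',k'}=1$) gives
\[
c_{i,j}=c_{i,j'}c_{j,k'}-c_{i,k'}c_{j,j'}=(b_1+b_2)(c_1+c_2)-b_2 c_1=b_1c_1+b_1c_2+b_2c_2,
\]
which is exactly the first coordinate of $\Delta$. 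The Ptolemy relations on $\{j,k,k',i'\}$ and $\{k,i,i',j'\}$ produce the other two coordinates in the same way. For the coprimality, Lemma \ref{lem:gcd} applied to the triangle $\{i,i',k'\}$ shows that $\gcd(b_1,b_2)$ must divide $c_{i',k'}=1$; the pairs $(a_1,a_2)$ and $(c_1,c_2)$ are handled identically via the triangles $\{j',k,k'\}$ and $\{i',j,j'\}$. This places the tuple in $S_{\ge 0}$.

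The main obstacle I anticipate is the degenerate case in which one of $i',j',k'$ coincides with the corresponding vertex of $\{i,j,k\}$ (for example $i'=i$, which forces $b_1=c_{i,i}=0$). In these situations the Ptolemy identities above remain formally valid thanks to the convention $c_{s,s}=0$, but one must check that coprimality survives; this is fine because the missing element of the affected pair is forced to equal $1$ (e.g., $i'=i$ gives $b_2=c_{i,k'}=c_{i',k'}=1$ via the collapsed identity $b_1+b_2=c_{i,j'}=c_{i',j'}=1$). Once these boundary configurations have been dispatched by direct inspection, the formulas from the generic case give the desired tuple in $S_{\ge 0}$, completing the proof.
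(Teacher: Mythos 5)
Your proposal is correct and follows essentially the same route as the paper: it invokes Lemma \ref{inscribed_triangle} to find the inner unit triangle $i',j',k'$, defines the same six labels $a_1=c_{k,k'},\dots,c_2=c_{i',j}$, derives the identities $c_{i,j'}=b_1+b_2$ etc.\ and the three coordinates of $\Delta$ via the same Ptolemy relations, and gets coprimality from Lemma \ref{lem:gcd} applied to the triangles containing a unit edge. The explicit check of the degenerate cases $i'=i$ etc.\ is a harmless (and correct) addition that the paper leaves implicit.
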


\begin{proof}
Choose $i',j',k'$ as in Lemma \ref{inscribed_triangle}, i.e.
\begin{equation}\label{c_ones}
c_{i',j'} = c_{j',k'} = c_{k',i'} = 1.
\end{equation}
By Lemma \ref{lem:gcd},
\[ \gcd(c_{k,k'},c_{j',k})=\gcd(c_{i,i'},c_{k',i})=\gcd(c_{j,j'},c_{i',j}) = 1. \]
Using the Ptolemy relations (Theorem \ref{thm:ptolemy} and also Remark 
\ref{rem:ptolemy}\,(1))) and Equation (\ref{c_ones}), we obtain
\begin{eqnarray*}
c_{i',k} &=& c_{k,k'} + c_{j',k}, \\
c_{j',i} &=& c_{i,i'} + c_{k',i}, \\
c_{k',j} &=& c_{j,j'} + c_{i',j}.
\end{eqnarray*}
Again by the Ptolemy relations we now obtain
\begin{eqnarray*}
c_{i,j} &=& c_{i,i'} c_{k',j} + c_{k',i} c_{i',j} = c_{i,i'} c_{j,j'} + 
c_{i,i'} c_{i',j} + c_{k',i} c_{i',j}, \\
c_{j,k} &=& c_{i',k} c_{j,j'} + c_{j',k} c_{i',j} = 
c_{k,k'} c_{j,j'} + c_{j',k} c_{j,j'} + c_{j',k} c_{i',j} , \\
c_{k,i} &=& c_{k,k'} c_{j',i} + c_{j',k} c_{k',i} = 
c_{k,k'} c_{i,i'} + c_{k,k'} c_{k',i} +  c_{j',k} c_{k',i},
\end{eqnarray*}
and hence
$\Delta((c_{k,k'},c_{j',k},c_{i,i'},c_{k',i},c_{j,j'},c_{i',j})) = (c_{i,j}, c_{j,k}, c_{k,i})$,
as desired.
\end{proof}

We now show a converse to Proposition \ref{prop:Delta1}, namely that
every triple in the image of $S_{\ge 0}$ under the map $\Delta$ actually appears as labels of
a triangle cut out of some classic Conway-Coxeter frieze.

\begin{Proposition} \label{prop:Delta2}
Let $(a_1,a_2,b_1,b_2,c_1,c_2)\in S_{\ge 0}$.
Then there exists a classic Conway-Coxeter frieze $\mathcal{C}=(c_{i,j})$ and $i \le j \le k$ such that
$(c_{i,j}, c_{j,k}, c_{k,i}) = \Delta((a_1,a_2,b_1,b_2,c_1,c_2))$.
\end{Proposition}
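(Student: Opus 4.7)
The plan is to reverse-engineer the geometric picture from the proof of Proposition~\ref{prop:Delta1} (see Figure~\ref{fig_triangle}): I would assemble a triangulated polygon whose classic Conway-Coxeter frieze has an inscribed central triangle with all three edges carrying frieze value $1$, and with the three target vertices $i, j, k$ lying outside, each realizing a pair of prescribed labels with its two neighbouring corners of the inscribed triangle.

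Concretely, I would first apply Lemma~\ref{accordion} separately to each of the three coprime pairs $(a_1, a_2)$, $(b_1, b_2)$, $(c_1, c_2)$, producing three triangulated polygons $P_A, P_B, P_C$, each carrying its own classic Conway-Coxeter frieze and each containing a distinguished boundary edge of value~$1$ together with a distinguished third vertex whose frieze values to the two endpoints of this edge are exactly the two numbers in the corresponding coprime pair. I would then take a new central triangle on fresh vertices $i', j', k'$ and glue $P_A, P_B, P_C$ along their distinguished edges to the edges $(j', k'), (k', i'), (i', j')$ respectively, renaming the three apex vertices as $k, i, j$ and choosing the orientation of each gluing so that the target identifications
\[ c_{k,k'} = a_1,\ c_{j',k} = a_2,\ c_{i,i'} = b_1,\ c_{k',i} = b_2,\ c_{j,j'} = c_1,\ c_{i',j} = c_2 \]
will hold. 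Let $\mathcal{T}$ denote the triangulation of the resulting polygon $P$ formed by the central triangle together with the inherited triangulations of $P_A, P_B, P_C$, and let $\mathcal{C}$ be its classic Conway-Coxeter frieze.

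To finish, two ``locality'' claims about $\mathcal{C}$ need to be established: (a) each edge of the central triangle has frieze value~$1$, so that $c_{i', j'} = c_{j', k'} = c_{k', i'} = 1$; and (b) for each $X \in \{A, B, C\}$, the frieze values of $\mathcal{C}$ between two vertices both lying in $P_X$ coincide with those from the Conway-Coxeter frieze of $P_X$ produced by Lemma~\ref{accordion}. Claim (a) is the classical fact that any edge of a triangulation has Conway-Coxeter frieze value~$1$; granted (a), claim (b) follows because the Ptolemy relations of Theorem~\ref{thm:ptolemy} internal to $P_X$, together with the value $1$ on its distinguished edge and on its other boundary edges, form a closed system that uniquely determines all frieze entries within $P_X$. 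With (a) and (b) in hand, the six target labels are realized, and the Ptolemy computation from the proof of Proposition~\ref{prop:Delta1} then applies verbatim to yield $(c_{i,j}, c_{j,k}, c_{k,i}) = \Delta((a_1, a_2, b_1, b_2, c_1, c_2))$.

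The main obstacle I anticipate is the clean justification of claim (a): the statement that diagonals of a triangulation always carry Conway-Coxeter frieze value~$1$ is folklore but is not proved in the excerpt; it can be handled either by citing a standard combinatorial description (via $T$-paths or matchings) or by an inductive argument based on an ear decomposition of $\mathcal{T}$. A secondary technicality is the treatment of degenerate coprime pairs such as $(0,1)$ or $(1,0)$, in which the corresponding accordion polygon collapses to a triangle and the apex vertex is identified with one of the central-triangle vertices; this is consistent with the weak inequalities $i \le j \le k$ allowed in the statement.
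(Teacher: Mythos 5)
Your proposal is correct and follows essentially the same route as the paper: apply Lemma~\ref{accordion} three times to the coprime pairs, glue the three resulting triangulations along a central triangle of $1$'s, and then run the Ptolemy computation from the proof of Proposition~\ref{prop:Delta1}. The two ``locality'' claims you isolate (that diagonals of the triangulation carry frieze value $1$, and that the glued frieze restricts on each piece to the original one) are simply asserted without comment in the paper's proof, so your extra care there is a refinement rather than a divergence.
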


\begin{proof}
By Lemma \ref{accordion}, there are three classic Conway-Coxeter friezes $\tilde{\mathcal{C}}=(\tilde c_{i,j})$, $\mathcal{C'}=(c'_{i,j})$, $\mathcal{C''}=(c''_{i,j})$
such that
\begin{eqnarray*}
&& \tilde c_{1,2}=1, \quad \tilde c_{2,\ell}=a_1, \quad \tilde c_{\ell,1}=a_2, \\
&& c'_{1,2}=1, \quad  c'_{2,\ell'}=b_1, \quad  c'_{\ell',1}=b_2, \\
&& c''_{1,2}=1, \quad  c''_{2,\ell''}=c_1, \quad  c''_{\ell'',1}=c_2
\end{eqnarray*}
for some $\ell,\ell',\ell''$.
Let $\tilde{\mathcal{T}},\mathcal{T'},\mathcal{T''}$ be the corresponding triangulations 
of some regular polygons.
We glue these three triangulations together in such a way that the edges with
labels $\tilde c_{1,2}$, $c'_{1,2}$, $c''_{1,2}$ become edges of an inner triangle and 
obtain a new triangulation $\mathcal{T}$. In the classic Conway-Coxeter frieze $\mathcal{C}$
corresponding to $\mathcal{T}$, this inner triangle carries labels
$c_{i',j'}=c_{j',k'}=c_{k',i'}=1$ for some $i',j',k'$. Moreover, there are $i,j,k$ such that
\[ c_{i,i'} = b_1, \quad c_{k',i} = b_2, \quad c_{j,j'} = c_1, \quad c_{i',j} = c_2, \quad c_{k,k'} = a_1, \quad c_{j',k} = a_2. \]
In other words, our triangulation $\mathcal{T}$ has the shape as in Figure
\ref{fig_triangle}.
Using the Ptolemy relations (Theorem \ref{thm:ptolemy}) several times (as in the 
proof of Proposition \ref{prop:Delta1}) we obtain
$(c_{i,j}, c_{j,k}, c_{k,i}) = \Delta((a_1,a_2,b_1,b_2,c_1,c_2))$.
\end{proof}

Our goal is now to describe the image of $S_{\ge 0}=S\cap \ZZ_{\ge 0}^6$ under the map
$\Delta : S \rightarrow \ZZ^3$.
The strategy will be the following.
For a given triple $(a,b,c)\in\Delta(S_{\ge 0})\subseteq \mathbb{Z}_{\ge 0}^3$, 
imagine that the preimage of 
$(a,b,c)$ under $\Delta$ is an iceberg. We are looking for a 
non-negative preimage, 
i.e.\ for a place on the iceberg which is not under water. In a first step we find 
any place on the iceberg (Theorem \ref{triangles_Z}), the second step 
(Theorem \ref{exi}) is to move this solution (using Lemma \ref{eta_triangle}) to 
the dry peak of the iceberg.

Theorem \ref{val} clarifies the precise condition for $(a,b,c)$ to be in 
the image, Theorem \ref{mres} collects all these results and states the 
complete classification of triangles appearing in classic Conway-Coxeter friezes. 

\begin{Definition}
For a prime number $p$ and $n\in\NN$, we denote by $\nu_p(n)\in\ZZ_{\ge 0}$ the exponent of $p$ in the prime factorization of $n$, i.e.\
$p^{\nu_p(n)}$ divides $n$ but $p^{\nu_p(n)+1}$ does not divide $n$. We call $\nu_p(n)$ the \emph{$p$-valuation} of $n$.
\end{Definition}

\begin{Theorem}\label{triangles_Z}
Let $a,b,c\in \NN$ be such that
$d:=\gcd(a,b)=\gcd(b,c)=\gcd(a,c)$,
and assume that either
\[ \nu_2(a)=\nu_2(b)=\nu_2(c)=0\quad  \text{or} \quad
|\{\nu_2(a),\nu_2(b),\nu_2(c)\}|>1. \]
Then there exist $a_1,b_2\in\ZZ$ such that
\[ a_1 a + b b_2 = c \quad
\text{and}
\quad \gcd(a_1,b) = \gcd(a,b_2) = 1, \]
i.e.\ $\Delta((a_1,b,a-b_2,b_2,0,1)) = (a,b,c)$.
\end{Theorem}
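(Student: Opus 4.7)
My plan is to translate the problem into a Bezout equation with coprimality side-conditions and solve it prime-by-prime via CRT. Set $d = \gcd(a,b)$ and write $a = da'$, $b = db'$, $c = dc'$; the hypothesis $\gcd(a,b) = \gcd(b,c) = \gcd(a,c) = d$ forces $\gcd(a',b') = \gcd(a',c') = \gcd(b',c') = 1$. Since $d$ divides $c$, Bezout supplies a particular integer solution $(a_1^0, b_2^0)$ of $a_1 a + b b_2 = c$, and the general solution is $(a_1, b_2) = (a_1^0 + tb', \, b_2^0 - ta')$ with $t \in \ZZ$. The task reduces to choosing $t$ so that $\gcd(a_1^0 + tb', b) = 1$ and $\gcd(a, b_2^0 - ta') = 1$.

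These coprimality conditions are local at the finitely many primes $p$ dividing $ab$, and each such $p$ imposes at most two forbidden residues on $t \pmod p$: one from $p \mid a_1^0 + tb'$ (if $p \mid b$) and one from $p \mid b_2^0 - ta'$ (if $p \mid a$). By CRT it suffices to exhibit an admissible residue class for each $p$. The valuation-level form of the hypothesis reads $\min(\nu_p(a), \nu_p(b)) = \min(\nu_p(b), \nu_p(c)) = \min(\nu_p(a), \nu_p(c)) = \nu_p(d)$, so at least two of $\nu_p(a), \nu_p(b), \nu_p(c)$ equal $\nu_p(d)$. If $p$ divides only one of $a, b$, say $p \mid b$ and $p \nmid a$, then $p \nmid d$, forcing $p \mid b'$ and hence $p \nmid c'$ (by $\gcd(b', c') = 1$), so $p \nmid c$; reducing $a_1^0 a + b b_2^0 = c$ modulo $p$ then gives $a_1^0 \not\equiv 0 \pmod p$ automatically, with no constraint on $t$. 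If $p \mid d$ and exactly one of $a', b'$ is divisible by $p$, say $p \mid a'$, then $\nu_p(a) > \nu_p(b) = \nu_p(c) = \nu_p(d)$, and a direct valuation count on $a_1^0 a + b b_2^0 = c$ forces $\nu_p(b_2^0) = 0$; only the $p \nmid a_1^0 + tb'$ condition is genuine, excluding a single residue, hence solvable.

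The main obstacle is the remaining subcase $p \mid d$ with $p \nmid a'b'$ (so $\nu_p(a) = \nu_p(b) = \nu_p(d)$), where both forbidden residues on $t$ are genuine. For $p \geq 3$ we may still avoid two classes mod $p$. For $p = 2$, using that $a', b'$ are odd, the two forbidden residues become $a_1^0 \pmod 2$ and $b_2^0 \pmod 2$, so an admissible $t$ exists iff $a_1^0 \equiv b_2^0 \pmod 2$. Reducing $a_1^0 a' + b' b_2^0 = c'$ modulo $2$ yields $a_1^0 + b_2^0 \equiv c' \pmod 2$, so admissibility is equivalent to $c'$ being even, i.e.\ to $\nu_2(c) > \nu_2(d) = \nu_2(a) = \nu_2(b)$. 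The unique failing configuration is thus $\nu_2(a) = \nu_2(b) = \nu_2(c) \geq 1$, which is precisely the case excluded by the hypothesis (the other excluded alternative $\nu_2(a) = \nu_2(b) = \nu_2(c) = 0$ cannot occur here because we are assuming $p = 2 \mid d$). Combining the admissible residue classes via CRT across the finitely many primes dividing $ab$ produces the required $t$, and hence $(a_1, b_2)$. The $\Delta$-identity $\Delta((a_1, b, a - b_2, b_2, 0, 1)) = (a, b, c)$ is then a routine substitution, noting $\gcd(a - b_2, b_2) = \gcd(a, b_2) = 1$.
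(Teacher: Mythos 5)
Your argument is correct and follows essentially the same route as the paper: parametrize the integer solutions of $a_1a+bb_2=c$ via Bezout, analyse the two coprimality constraints one prime at a time (with the decisive mod~$2$ computation showing the only obstruction is $\nu_2(a)=\nu_2(b)=\nu_2(c)\ge 1$, exactly the configuration excluded by the hypothesis), and assemble an admissible shift $t$ by the Chinese Remainder Theorem. The paper organizes the local analysis slightly differently---it first reduces $\gcd(a_1,b)=\gcd(a_1,d)$ and $\gcd(a,b_2)=\gcd(b_2,d)$ so that only primes dividing $d$ need to be examined---but the mathematical content is the same.
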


\begin{Remark} \label{rem:valuation}
Suppose $a,b,c$ satisfy the assumptions in Theorem \ref{triangles_Z}. 
Then either all of $a,b,c$ are odd, or we have 
$|\{\nu_2(a),\nu_2(b),\nu_2(c)\}|=2$ and the maximal value of 
$\{\nu_2(a),\nu_2(b),\nu_2(c)\}$ is attained only once. (In fact, for the second
part use the condition on the greatest common divisors being equal.)
\end{Remark}

\begin{proof}
We set $a':=\frac{a}{d}$, $b':=\frac{b}{d}$ and $c':=\frac{c}{d}$.
In particular, $a'$ and $b'$ are coprime, so 
we may choose $u,v\in\ZZ$ such that
\begin{equation} \label{eq:bezout}
    u a'+v b' = 1.
\end{equation}    
Then multiplication by $c$ yields
\begin{equation}u c' a + v c' b = c. 
\end{equation}
For any $k\in\ZZ$ set
\[ a_1:=uc'+kb', \quad b_2:=v c'-ka'. \]
Then
\[ a_1 a + b b_2 = (uc'+kb') a + b(v c'-ka') = c \]
since $kb'a=bka'$ by definition of $a',b'$.
\smallskip

It remains to find some $k\in \ZZ$ such that in addition
$\gcd(a_1,b) = \gcd(a,b_2) = 1$.
\smallskip

We first claim that $\gcd(a_1,b')\mid c'$; in fact, we have
$$\gcd(a_1,b')=\gcd(uc'+kb',b')=\gcd(uc',b') \mid uc'$$
and since $b'$ and $u$ are coprime by (\ref{eq:bezout})
the claim follows.

Moreover, $b'$ and $c'$ are coprime by definition, so the above claim
yields $\gcd(a_1,b')=1$ and we obtain
\begin{equation}\label{cop1}
\gcd(a_1,b) = \gcd(a_1,b'd) = \gcd(a_1,d).
\end{equation}
Similarly,
\begin{equation}\label{cop2}
\gcd(a,b_2) = \gcd(b_2,d).
\end{equation}

This means that it suffices to find suitable $k\in \mathbb{Z}$ such that
$\gcd(a_1,d)=1=\gcd(b_2,d)$. 
\smallskip

If $d=1$ then we are clearly done. 
So assume $d>1$ and 
let $d=\prod_{i=1}^r p_i^{e_i}$ be the prime factorization (so 
$p_i\neq p_j$ for $i\neq j$). The idea now is to consider each prime divisor 
$p_i$ separately and find possible numbers $k_i$ such that 
$\gcd(a_1,p_i)=1=\gcd(b_2,p_i)$. This leads to different congruences and finally
an overall suitable $k\in \mathbb{Z}$ such that $\gcd(a_1,p_i)=1=\gcd(b_2,p_i)$
for all $i=1,\ldots, r$ is constructed by using the 
Chinese Remainder Theorem. 
\smallskip

Consider first an $i$ with $p_i>2$.
If $p_i$ divides $b'$, then $p_i$ does not divide $c'$ (by definition)
and $p_i$ does not divide $u$ by (\ref{eq:bezout}).
Thus we have
$\gcd(uc',p_i)=1$ and hence 
$$\gcd(a_1,p_i)=\gcd(uc'+kb',p_i)=1$$ 
for any $k$.
If $p_i$ does not divide $b'$, then $\gcd(a_1,p_i)=\gcd(uc'+kb',p_i)=1$ for all but
one $k\in \{0,\ldots,p_i-1\}$ (use that $b'$ is invertible modulo $p_i$).
Thus we get in all circumstances 
\[
|\{ k\in \{0,\ldots,p_i-1\} \mid \gcd(a_1,p_i)=1 \}| \ge p_i-1.
\]
It follows by symmetry that
\[
|\{ k\in \{0,\ldots,p_i-1\} \mid \gcd(p_i,b_2)=1 \}| \ge p_i-1.
\]
But $2(p_i-1)>p_i$ by assumption, thus we find a $k_i\in \{0,\ldots,p_i-1\}$ such that
$\gcd(a_1,p_i) = \gcd(p_i,b_2) = 1$.
\smallskip

If $p_i=2$, then $d$ and hence all of $a,b,c$ are even. It
follows from Remark \ref{rem:valuation} that precisely 
one of $a',b',c'$ is even.

If $c'$ is even, then $a',b'$ are odd. But then $a_1$ and $b_2$ are both congruent to $k$ modulo $2$, thus it suffices to choose any $k_i:=k$ odd.

The last case is when $a'$ or $b'$ is even. By symmetry we can assume that $a'$ is 
even (and hence $b'$ and $c'$ are odd).
But then $a_1\equiv u+k \:\:(\md 2)$ and $b_2\equiv v \equiv 1 \:\:(\md 2)$, where
the last congruence follows from (\ref{eq:bezout}).
Thus we may choose any $k_i:=k$ such that $u+k$ is odd.
\smallskip

Note that in all the above cases, the choice of $k_i$ is independent 
of adding multiples of $p_i$. So this yields a system of congruences
$$k \equiv k_i \:\:(\md p_i)\mbox{\hskip0.3cm for $i=1,\ldots,r$}.
$$
The Chinese Remainder Theorem now gives a solution for $k$ as desired.
\end{proof}

To understand the image of $\Delta$, we still need some more preparations.
The following lemma describes a special element in a nice group of 
transformations leaving $\Delta$ invariant.

\begin{Lemma}\label{eta_triangle}
Let $x:=(a_1,a_2,b_1,b_2,c_1,c_2)\in S$, $t\in\ZZ$ and
$$ \Gamma_t(x):=(a_1 t - a_2, a_1(1-t) + a_2, -b_2, b_1 + b_2(t + 1), c_1t + c_2(t - 1), c_1 + c_2).$$
Then $\Gamma_t(x)\in S$ and $\Delta(x)=\Delta(\Gamma_t(x))$.
\end{Lemma}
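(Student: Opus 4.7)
The plan is to verify the two assertions separately. For membership in $S$, write $(A_1,A_2,B_1,B_2,C_1,C_2):=\Gamma_t(x)$ and show that each pair $(A_1,A_2)$, $(B_1,B_2)$, $(C_1,C_2)$ is coprime. The key trick is to produce, for each pair, two $\ZZ$-linear combinations recovering the two original entries. For example, $A_1+A_2=a_1$ directly, and $(t-1)A_1+tA_2=a_1 t(t-1)-a_2(t-1)+a_1 t(1-t)+a_2 t=a_2$ (after simplification), so $\gcd(A_1,A_2)$ divides $\gcd(a_1,a_2)=1$. The argument for $(B_1,B_2)$ is easiest: $B_1=-b_2$ gives $\gcd\mid b_2$, and $B_2+(t+1)B_1=b_1$ gives $\gcd\mid b_1$. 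For $(C_1,C_2)$, note $C_1-(t-1)C_2=c_1$ and $C_2-c_1=c_2$, so $\gcd(C_1,C_2)$ divides $\gcd(c_1,c_2)=1$.

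For the equality $\Delta(x)=\Delta(\Gamma_t(x))$, I would simply expand each of the three components of $\Delta(\Gamma_t(x))$ and check that they agree with those of $\Delta(x)$. The computations simplify if one groups terms using $A_1+A_2=a_1$. For instance, the second component of $\Delta(\Gamma_t(x))$ is
\[
A_1 C_1+A_2 C_1+A_2 C_2 = (A_1+A_2)C_1+A_2 C_2 = a_1 C_1 + A_2 C_2,
\]
and expanding $a_1(c_1 t+c_2(t-1))+(a_1(1-t)+a_2)(c_1+c_2)$ collapses to $a_1 c_1+a_2 c_1+a_2 c_2$. The third component similarly becomes $A_1 B_1 + B_2(A_1+A_2)=(a_1 t-a_2)(-b_2)+(b_1+b_2(t+1))a_1=a_1 b_1+a_1 b_2+a_2 b_2$. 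The first component does not factor as cleanly, but a straightforward expansion of $-b_2(c_1 t+c_2(t-1))-b_2(c_1+c_2)+(b_1+b_2(t+1))(c_1+c_2)$ gives $b_1 c_1+b_1 c_2+b_2 c_2$.

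No step is a genuine obstacle: both assertions reduce to bookkeeping. The only mildly subtle point is choosing the right linear combinations in the coprimality check for $(A_1,A_2)$ and $(C_1,C_2)$; once $a_1=A_1+A_2$ and $a_2$ can be written in terms of $A_1,A_2$ (and likewise for $c_1,c_2$), everything follows from the hypotheses $\gcd(a_1,a_2)=\gcd(b_1,b_2)=\gcd(c_1,c_2)=1$. The verification of $\Delta$-invariance is a purely mechanical polynomial identity in the six variables $a_1,a_2,b_1,b_2,c_1,c_2$ and the parameter $t$.
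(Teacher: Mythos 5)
Your proposal is correct and is essentially the paper's own argument written out in full: the paper notes that $\Gamma_t$ acts on each pair $(a_1,a_2)$, $(b_1,b_2)$, $(c_1,c_2)$ by a matrix in $\SL_2(\ZZ)$ (so coprimality is preserved), which is exactly what your explicit integer linear combinations recovering $a_1,a_2,b_1,b_2,c_1,c_2$ verify, and it likewise disposes of $\Delta(x)=\Delta(\Gamma_t(x))$ by direct evaluation.
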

\begin{proof}
For the first claim note that $\Gamma_t(x)$ is given by a block diagonal matrix with $2\times 2$-matrices in $\SL_2(\mathbb{Z})$ on the diagonal.
Therefore elements in $S$ are mapped to elements in $S$.
For the second claim, just evaluate $\Delta$.
\end{proof}

\begin{Theorem}\label{exi}
Let $a,b,c\in \NN$ be such that
$d:=\gcd(a,b)=\gcd(b,c)=\gcd(a,c)$,
and assume that either
\[ \nu_2(a)=\nu_2(b)=\nu_2(c)=0\quad  \text{or} \quad
|\{\nu_2(a),\nu_2(b),\nu_2(c)\}|>1. \]
Then there exists a
tuple $(a_1,a_2,b_1,b_2,c_1,c_2)\in S\cap \ZZ_{\ge 0}^6$ such that
\[ \Delta((a_1,a_2,b_1,b_2,c_1,c_2)) = (a,b,c). \]
\end{Theorem}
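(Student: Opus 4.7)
I would proceed in two stages. First, invoke Theorem~\ref{triangles_Z} to obtain a preimage $x_0 = (a_1, b, a-b_2, b_2, 0, 1) \in S$ with $\Delta(x_0) = (a,b,c)$. The coordinates $a_1$ and $b_2$ (and hence $a-b_2$) can be negative, so generally $x_0 \notin S_{\ge 0}$. Second, iteratively apply transformations $\Gamma_t$ from Lemma~\ref{eta_triangle} to move $x_0$ into $S_{\ge 0}$; each such $\Gamma_t$ preserves both $\Delta$ and membership in $S$ (the latter because $\Gamma_t$ acts blockwise by $\SL_2(\ZZ)$ on the three pairs).

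Before iterating, I would normalize using the shift $(a_1, b_2) \mapsto (a_1 + mb,\; b_2 - ma)$---which arises from the freedom $k \mapsto k + md$ in the proof of Theorem~\ref{triangles_Z}---to bring $b_2$ into $\{-(a-1),\ldots,-1\}$ (the case $a=1$ is trivial and handled separately, e.g.\ by the tuple $(c,b,1,0,0,1)$). After this normalization, $b_1 = a - b_2 > 0$ and $a_1 = (c + b|b_2|)/a > 0$, so the only remaining negative coordinate of $x_0$ is $b_2$ itself.

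The key observation for the second stage is that $\Gamma_t$ acts on the pair $(b_1, b_2)$ by $(b_1, b_2) \mapsto (-b_2,\; b_1 + (t+1)b_2)$, corresponding to the $\SL_2(\ZZ)$-matrix $\bigl(\begin{smallmatrix} 0 & -1 \\ 1 & t+1 \end{smallmatrix}\bigr)$. Choosing $t := -1 - \lfloor b_1/b_2 \rfloor$ makes this into a Euclidean-division step, strictly decreasing $|b_2|$ while preserving $\gcd(b_1,b_2) = 1$. Iterating these Euclidean steps, after finitely many applications of suitable $\Gamma_{t_i}$ the $b$-pair becomes a primitive vector in $\ZZ_{\ge 0}^2$.

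The main obstacle will be the coupling: the same parameter $t$ simultaneously transforms $(a_1, a_2)$ and $(c_1, c_2)$, and I must verify that at termination these two pairs are also non-negative. The argument tracks the signs carefully through the iteration, using that the initial tuple has $a_2 = b > 0$, $c_1 = 0$, $c_2 = 1$ together with $a_1 > 0$ after normalization, so that the Euclidean reduction on the $b$-pair ``spreads'' non-negativity to the other pairs. A small final adjustment (one more $\Gamma_t$ with appropriately chosen $t$) resolves any residual sign issue. If the direct case-analysis becomes unwieldy in a corner case, I would fall back on the $\ZZ/3$-symmetry of $\Delta$---the identity $\Delta((c_1, c_2, a_1, a_2, b_1, b_2)) = (c, a, b)$ shows that a cyclic rotation of the 6-tuple realizes a cyclic shift of $(a,b,c)$---to reduce to a more tractable instance in which the Euclidean reduction succeeds directly.
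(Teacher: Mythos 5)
Your overall architecture is the same as the paper's: obtain an integer preimage from Theorem~\ref{triangles_Z} and push it into $S_{\ge 0}$ by iterating $\Gamma_t$. The gap is in your choice of $t$. You take $t=-1-\lfloor b_1/b_2\rfloor$ so that the $b$-pair step is Euclidean-optimal, and defer the non-negativity of the $a$- and $c$-pairs to sign-tracking plus ``one more $\Gamma_t$''. But the $a$-pair constraint is rigid: $\Gamma_t$ sends $(a_1,a_2)$ to $(a_1t-a_2,\;a_1(1-t)+a_2)$, so for $a_1>0$ both images are non-negative only when $a_2/a_1\le t\le a_2/a_1+1$, i.e.\ essentially only for $t=\lceil a_2/a_1\rceil$. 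Using the invariant $a_1b_1+(a_1+a_2)b_2=c$ one computes that your $t$ equals $\lceil a_2/a_1+c/(a_1|b_2|)\rceil$, which leaves this window as soon as $c/(a_1|b_2|)$ is large enough, and then $\tilde a_2<0$. Concretely, take $(a,b,c)=(2,3,7)$ (all pairwise gcds are $1$ and $|\{\nu_2(a),\nu_2(b),\nu_2(c)\}|=2$, so the hypotheses hold). Your normalization forces $b_2=-1$, hence $a_1=5$ and $x_0=(5,3,3,-1,0,1)$; your first step is $t=2$ and produces the $a$-pair $(7,-2)$. At that point the $b$-pair is $(1,0)$ and the $c$-pair is $(1,1)$, and no single further $\Gamma_{t'}$ can repair the damage: keeping $(7t'+2,\,5-7t')$ non-negative forces $t'=0$, while keeping $(2t'-1,\,2)$ non-negative forces $t'\ge 1$. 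So the proposed ``small final adjustment'' fails, and the appeal to the $\ZZ/3$-symmetry of $\Delta$ is not a substitute for an argument. (The target is reachable: $\Delta((2,3,1,1,0,1))=(2,3,7)$; your algorithm just does not find it.)

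The paper resolves the coupling by inverting your logic. It first uses the symmetry to arrange $c\le a$ and $c\le b$, which guarantees $b_2\le 0$ at the start; then at each step it takes $t:=\lceil a_2/a_1\rceil$, the value \emph{forced} by the $a$-pair, observes that $t\ge 1$ automatically keeps the $c$-pair non-negative, and proves that this forced choice still strictly improves the $b$-pair via $0>\tilde b_2=b_1+b_2(t+1)\ge \frac{c}{a_1}+b_2>b_2$ (in the non-terminating case). Termination follows because $b_2$ strictly increases towards $0$. If you replace your Euclidean-optimal $t$ by $\lceil a_2/a_1\rceil$ and supply that inequality, your argument becomes the paper's; as written, the key iterative step does not preserve the constraints it needs to.
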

\begin{proof}
By symmetry,
we may assume that $c\le a$ and $c\le b$.
Since we have the same assumptions as in Theorem \ref{triangles_Z}, we get
$a_1,b_2\in\ZZ$ such that
$a_1 a + b b_2 = c$
and
$\gcd(a_1,b) = \gcd(a,b_2) = 1$. At least one of $a_1,b_2$ must be non-negative
(since $a,b,c\in \mathbb{N}$),
so without loss of generality we
can assume $a_1\ge 0$.

If $a_1=0$ then $b=1$ since $\gcd(a_1,b)=1$;
but then
$\Delta((0,1,a-c,c,0,1))=(a,b,c)$ and we are finished (notice that $\gcd(a-c,c)=\gcd(a,c)=\gcd(a,b)=1$).

Thus assume that $a_1>0$ and let $a_2:=b$, $b_1:=a-b_2$, $c_1:=0$, and $c_2:=1$.
Then
$\Delta((a_1,a_2,b_1,b_2,c_1,c_2)) = (a,b,c)$ and indeed $(a_1,a_2,b_1,b_2,c_1,c_2)\in S$.
However, $b_1$ and $b_2$ may be negative in general, although $a_1,a_2,c_1,c_2\ge 0$.

We now show that applying the transformation $\Gamma_t$ of Lemma \ref{eta_triangle} several times eventually produces a tuple in $S\cap \ZZ_{\ge 0}^6$. In fact, let $t:=\lceil a_2/a_1 \rceil \in \mathbb{N}$ and consider
$$\Gamma_t((a_1,a_2,b_1,b_2,c_1,c_2))=:(\tilde a_1,\tilde a_2,\tilde b_1,\tilde b_2,\tilde c_1,\tilde c_2)\in S.$$
Then by definition of $t$, we have
\begin{eqnarray*}
\tilde a_1 &=& a_1 t - a_2 \ge 0, \\
\tilde a_2 &=& a_1(1-t) + a_2 > 0, \\
\tilde c_1 &=& c_1t + c_2(t - 1) = t-1 \ge 0, \\
\tilde c_2 &=& c_1 + c_2 = 1 > 0.
\end{eqnarray*}
Moreover, we claim that $\tilde b_1=-b_2\ge 0$; in fact,
$a_1a+bb_2 = c \le a$ and since $a_1a\ge a$ we conclude $b_2\le 0$. 

If $\tilde b_2\ge 0$ as well, then we are finished since
$(\tilde a_1,\tilde a_2,\tilde b_1,\tilde b_2,\tilde c_1,\tilde c_2)\in S\cap \ZZ_{\ge 0}^6$ as desired.
Otherwise, using that $b_2\le 0$ we get
\begin{eqnarray*}
0 & > & \tilde b_2 = b_1 + b_2(t + 1) = b_1 + b_2 (\lceil a_2/a_1 \rceil+1) \ge b_1 + b_2 (a_2/a_1+2) \\
&=& \frac{1}{a_1} (a_1b_1 + a_2b_2 + 2 a_1 b_2) = \frac{c+ a_1 b_2}{a_1} 
= \frac{c}{a_1} + b_2> b_2.
\end{eqnarray*}

If $\tilde a_1=0$, then $\tilde a_2=1$ (recall that $\gcd(\tilde a_1,\tilde a_2)=1$) and $c=\tilde b_2\ge 0$ (use the definition of $\Delta$ and Lemma \ref{eta_triangle}) and we are finished.
Thus assume that $\tilde a_1>0$.
But then we may replace $(a_1,a_2,b_1,b_2,c_1,c_2)$ by $(\tilde a_1,\tilde a_2,\tilde b_1,\tilde b_2,\tilde c_1,\tilde c_2)$ and repeat the same argument.
Since $0> \tilde{b_2} > b_2$, by induction we will eventually 
obtain $(\tilde a_1,\tilde a_2,\tilde b_1,\tilde b_2,\tilde c_1,\tilde c_2)\in S\cap \ZZ_{\ge 0}^6$ as desired.
\end{proof}

We now prove a converse to Theorem \ref{exi}.

\begin{Theorem}\label{val}
Let $a,b,c\in \NN$ be such that
$d:=\gcd(a,b)=\gcd(b,c)=\gcd(a,c)$,
and assume that
there exists $(a_1,a_2,b_1,b_2,c_1,c_2)\in S\cap \ZZ_{\ge 0}^6$ such that
\[ \Delta((a_1,a_2,b_1,b_2,c_1,c_2)) = (a,b,c). \]
Then either
\[ \nu_2(a)=\nu_2(b)=\nu_2(c)=0\quad  \text{or} \quad
|\{\nu_2(a),\nu_2(b),\nu_2(c)\}|>1. \]
\end{Theorem}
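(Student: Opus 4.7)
Proof proposal. The plan is to assume for contradiction that $\nu_2(a) = \nu_2(b) = \nu_2(c) = k$ for some $k \geq 1$, so $a, b, c$ are all even, and extract a contradiction from the explicit formulas defining $\Delta$.

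First I would run a mod-$2$ analysis on the three coprime pairs. Each pair $(x_1, x_2)$ has one of three parity types $X = (1,1), Y = (1,0), Z = (0,1)$, and expanding $a = b_1 c_1 + b_1 c_2 + b_2 c_2$ and its cyclic analogues modulo $2$ shows that $a, b, c$ are simultaneously even exactly when the ordered triple of types is one of $(X,Z,Y), (Y,X,Z), (Z,Y,X)$. Since cyclically permuting $(a_1,a_2), (b_1,b_2), (c_1,c_2)$ correspondingly permutes $(a,b,c)$, I may restrict to the first case, where $a_1, a_2, b_2, c_1$ are odd and $b_1, c_2$ and $s_a := a_1 + a_2$ are even. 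Writing $w_1 = \nu_2(b_1), w_2 = \nu_2(c_2), w_3 = \nu_2(s_a)$ (each $\ge 1$), a short case distinction on the ordering of $w_1, w_2, w_3$ applied to the rewritings
\[ a = b_1(c_1+c_2) + b_2 c_2, \quad b = s_a c_1 + a_2 c_2, \quad c = a_1 b_1 + s_a b_2 \]
shows that $\nu_2(a), \nu_2(b), \nu_2(c)$ can coincide only when $w_1 = w_2 = w_3 =: w$, and then each of them is at least $w+1$.

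To contradict the minimal possibility $k = w+1$, I would factor out the common $2^w$ and analyse modulo $4$ using the group homomorphism $\epsilon \colon (\text{odd integers}, \cdot) \to \ZZ/2$ defined by $\epsilon(x) := (x-1)/2 \bmod 2$. The equations $\nu_2(a) = \nu_2(b) = \nu_2(c) = w+1$ translate into three linear conditions over $\ZZ/2$ in the $\epsilon$-values of $a_1, a_2, b_2, c_1$ and of the tildes $b_1/2^w, c_2/2^w, s_a/2^w$. Summing the three conditions, every $\epsilon$-variable cancels except $\epsilon(a_1)$ and $\epsilon(a_2)$, giving $\epsilon(a_1) + \epsilon(a_2) \equiv \delta \pmod 2$ for an explicit $\delta$ which depends on whether $w=1$ or $w \ge 2$. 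On the other hand, expanding $s_a \pmod 8$ and using $\nu_2(s_a) = w$ produces the opposite congruence $\epsilon(a_1) + \epsilon(a_2) \equiv 1-\delta \pmod 2$, the desired contradiction.

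The main obstacle is extending this argument to $k > w+1$, where all three factored-out quantities become even and the single-level mod-$4$ sum yields only a consistent (not contradictory) congruence. My plan is to iterate the above analysis digit-by-digit in the $2$-adic expansion: after dividing each factored-out quantity by a further $2$, the correction term $2^{w-1}(b_1/2^w)(c_2/2^w)$ appearing in the expression for $a$ becomes $2^{w-2}(b_1/2^w)(c_2/2^w)$, and the $\epsilon$-style analysis repeats with the second-order bits of $a_1, a_2$ and of the tildes. After at most $w-1$ iterations the correction exponent reaches $0$, and since $(b_1/2^w)(c_2/2^w)$ remains odd throughout, the base-case contradiction of the previous paragraph reappears. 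An attractive alternative worth investigating is to apply a single transformation $\Gamma_t$ of Lemma \ref{eta_triangle} with $t$ chosen so that the new $s_a$ equals the old $a_1$ (hence odd), directly reducing to the base case $w = 0$; the difficulty there is to choose $t$ so that the transformed tuple remains in $S \cap \ZZ_{\ge 0}^6$.
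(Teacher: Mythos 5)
Your parity reduction (the three admissible type-triples match the table in the paper's proof), the forcing $w_1=w_2=w_3=:w$, and the mod-$4$ contradiction in the case $\nu_2(a)=\nu_2(b)=\nu_2(c)=w+1$ all check out. But the theorem requires excluding \emph{every} common value $k\ge w+1$, and for $k\ge w+2$ you concede that the mod-$4$ conditions are consistent; from that point on what you offer is a plan, not a proof, and the plan does not go through as described. After dividing $a/2^{w}$, $b/2^{w}$, $c/2^{w}$ by $2$ the resulting quantities, such as $(\beta c_1+b_2\gamma)/2$ with $\beta=b_1/2^w$, $\gamma=c_2/2^w$, are no longer bilinear expressions in odd variables, so there is no self-similar structure to iterate; the ``second-order bit'' analysis is no longer linear in the bits, since multiplication modulo $8$ creates cross terms and addition creates carries; and the termination claim ``after at most $w-1$ iterations'' confuses the exponent $w$ of the correction term $2^{w}\beta\gamma$ with the actual depth $k-w$ of the common valuation, which is not bounded in terms of $w$. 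The $\Gamma_t$ alternative is likewise only an untested idea (and transforming so that $a_1+a_2$ becomes odd moves you into a different parity case rather than a ``base case $w=0$''). So the proposal is incomplete exactly in the main case.

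For comparison, the paper avoids any digit-by-digit analysis: it uses the coprimality of $b_1,b_2$ a second time, via a B\'ezout relation $ub_1-vb_2=1$, to write $c_1+c_2=au-kb_2$ and $c_2=-av+kb_1$ with one odd integer $k$, substitutes into the formulas for $b$ and $c$ to get $\tilde b=\tilde a\,(a_1(u+v)+a_2u)-k\tilde c$, and then a single parity computation forces values of $u,v$ modulo $2$ contradicting $ub_1-vb_2=1$; this works uniformly in $\ell\ge 1$, which is precisely what your local argument lacks. If you want to finish along your own route, note the polynomial identity $(a_1+a_2)\,a=b_1\,b+c_2\,c$ (expand both sides of the definition of $\Delta$; the cyclic symmetry you invoked gives the analogous identities for the other two parity cases). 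In your case $b_1=2^w\beta$, $c_2=2^w\gamma$, $a_1+a_2=2^w\sigma$ with $\beta,\gamma,\sigma$ odd, so if $\nu_2(a)=\nu_2(b)=\nu_2(c)=k$ the left-hand side has $2$-valuation exactly $w+k$ while the right-hand side, being $2^{w+k}$ times a sum of two odd numbers, has valuation at least $w+k+1$ — a contradiction for every $k\ge 1$, which closes the gap and supersedes both the mod-$4$ step and the iteration.
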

\begin{proof}
We prove the claim indirectly. Assume that $\nu_2(a)=\nu_2(b)=\nu_2(c)=\ell\ge 1$, thus
we may write $a=2^\ell \tilde a$, $b=2^\ell \tilde b$, $c=2^\ell \tilde c$ with $\tilde a, \tilde b,\tilde c$ odd.
Let $(a_1,a_2,b_1,b_2,c_1,c_2)\in S\cap \ZZ_{\ge 0}^6$ be such that
$\Delta((a_1,a_2,b_1,b_2,c_1,c_2)) = (a,b,c)$.
Then by Definition \ref{def:S} we have 
\begin{eqnarray}
\label{eq:1} a &=& b_1(c_1+c_2)+b_2c_2, \\
\label{eq:2} b &=& a_1c_1 + a_2(c_1+c_2),\\
\label{eq:3} c &=& a_1(b_1+b_2)+a_2 b_2.
\end{eqnarray}
We claim that this implies
\begin{eqnarray*}
c_1+c_2 \equiv b_2 \equiv a_1 \:\: (\md \: 2), \\
c_1 \equiv a_2 \equiv b_1+b_2 \:\: (\md \: 2).
\end{eqnarray*}
In fact, consider equations (\ref{eq:1})-(\ref{eq:3}). Since $a,b,c$ are even
by assumption, the two summands on the right hand side of each equation
are both even or both odd. Suppose that in (\ref{eq:1}) both summands
are even; the other case is dealt with similarly, the details are left to
the reader. We distinguish two cases.
If $b_1$ is even then $b_2$ is odd (by definition of $S$), hence
$b_1+b_2$ is odd. Since $b_2$ is odd our assumption on (\ref{eq:1}) 
implies that $c_2$ is
even. Then $c_1$ is odd (again by definition of $S$), thus $c_1+c_2$ is odd. 
But $a_1,a_2$ can not both be even (by definition of $S$), so (\ref{eq:2})
implies that $a_1$ and $a_2$ are both odd. This proves the claim on
the above congruences
in case $b_1$ is even. If $b_1$ is odd then our assumption on (\ref{eq:1}) 
implies that $c_1+c_2$ is even. By definition of $S$ we conclude
that $c_1$ and $c_2$ are both odd. Then the assumption on (\ref{eq:1})
yields that $b_2$ is even. Then $b_1$ is odd (by definition of $S$)
and $b_1+b_2$ is odd. Now (\ref{eq:2}) implies that $a_1$ is even.
So $a_2$ is odd (by definition of $S$) and this completes the proof of the claim. 
\smallskip

Hence, for the values of $a_1,a_2,b_1,b_2,c_1,c_2,c_1+c_2$ modulo $2$, only the following three cases are possible
(recall that $b_1,b_2$ cannot both be even by definition of $S$):
\begin{center}
\begin{tabular}{c |c |c |c |c |c |c} 
$b_1$ & $b_2$ & $a_1$ & $a_2$ & $c_1$ & $c_2$ & $c_1+c_2$ \\
\hline
0 & 1 & 1 & 1 & 1 & 0 & 1  \\
1 & 0 & 0 & 1 & 1 & 1 & 0  \\
1 & 1 & 1 & 0 & 0 & 1 & 1
\end{tabular}
\end{center}
By definition of $S$, the numbers $b_1$ and $b_2$ are coprime, so we can
choose $u,v\in\ZZ$ such that 
\begin{equation} \label{eq:bezout2}
ub_1-vb_2=1.
\end{equation}
Together with (\ref{eq:1})
this yields
$$b_1(c_1+c_2)+b_2c_2=a = aub_1-avb_2,
$$
hence $b_1\mid c_2+av$ and $b_2\mid c_1+c_2-au$ (use that $b_1$ and $b_2$
are coprime by definition of $S$).
So there exists $k_1,k_2\in \mathbb{Z}$ with 
\[ c_1+c_2 = au - k_1b_2, \quad c_2 = -av + k_2b_1. \]
Using Equations (\ref{eq:bezout2}), (\ref{eq:1}), $a= a+(k_1-k_2)b_1b_2$.
Hence we can choose $k_1=k_2$ (for the case $b_1=0$ or $b_2=0$, this is clear a priori).
More precisely, from the possible parities in the above table
one sees that $k$ has to be odd in all circumstances.

Now we get from equations (\ref{eq:2}) and (\ref{eq:3}) that
\begin{eqnarray*}
b &=& a_1 (a(u+v)-k(b_1+b_2)) + a_2 (au - kb_2), \\
c &=& a_1 (b_1+b_2)+a_2b_2
\end{eqnarray*}
which implies
\begin{eqnarray*}
b &=& a (a_1(u+v)+a_2 u) - k (a_1(b_1+b_2) + a_2 b_2) \\
 &=& 2^\ell \tilde a (a_1(u+v)+a_2 u) - k 2^\ell \tilde c \\
 &=& 2^\ell (\tilde a (a_1(u+v)+a_2 u) - k \tilde c).
\end{eqnarray*}
Thus because $\tilde a$, $\tilde c$ and $k$ are odd, and
$\tilde b = \tilde a (a_1(u+v)+a_2 u) - k \tilde c$ is odd as well,
we get that $a_1(u+v)+a_2 u$ is even. Using this fact, and the fact that 
$u,v$ are coprime by (\ref{eq:bezout2}), the above table extends to:
\begin{center}
\begin{tabular}{c | c | c |c |c |c }
$b_1$ & $b_2$ & $a_1$ & $a_2$ & $u$ & $v$ \\
\hline
0 & 1 & 1 & 1 & 1 & 0  \\
1 & 0 & 0 & 1 & 0 & 1  \\
1 & 1 & 1 & 0 & 1 & 1
\end{tabular}
\end{center}
But each of these three congruences for $b_1,b_2,u,v$ contradicts the equation 
(\ref{eq:bezout2}).
\end{proof}

As an immediate corollary to the Theorems \ref{exi} and \ref{val} we obtain the 
main result of this section, giving a complete classification of triples
appearing as labels of triangles in classic Conway-Coxeter friezes. 

\begin{Theorem}\label{mres}
Let $a,b,c\in \NN$ be such that
$d:=\gcd(a,b)=\gcd(b,c)=\gcd(a,c)$.
Then there exists $(a_1,a_2,b_1,b_2,c_1,c_2)\in S\cap \ZZ_{\ge 0}^6$ such that
\[ \Delta((a_1,a_2,b_1,b_2,c_1,c_2)) = (a,b,c) \]
if and only if either
\[ \nu_2(a)=\nu_2(b)=\nu_2(c)=0\quad  \text{or} \quad
|\{\nu_2(a),\nu_2(b),\nu_2(c)\}|>1. \]
\end{Theorem}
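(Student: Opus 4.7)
The plan is to derive Theorem \ref{mres} as a direct combination of the two preceding results, Theorem \ref{exi} and Theorem \ref{val}, both of which operate under exactly the same hypothesis on the greatest common divisors as the statement at hand, namely $\gcd(a,b)=\gcd(b,c)=\gcd(a,c)$. Since the biconditional in Theorem \ref{mres} splits naturally into the two implications handled by these theorems, there is essentially nothing new to verify; the work lies entirely in matching the hypotheses and conclusions correctly.

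For the implication ``$\Leftarrow$'', I would simply invoke Theorem \ref{exi}: assuming the $2$-valuation condition (either $\nu_2(a)=\nu_2(b)=\nu_2(c)=0$, or the set $\{\nu_2(a),\nu_2(b),\nu_2(c)\}$ has cardinality greater than one), Theorem \ref{exi} provides a tuple $(a_1,a_2,b_1,b_2,c_1,c_2)\in S\cap\ZZ_{\ge 0}^6$ with $\Delta((a_1,a_2,b_1,b_2,c_1,c_2))=(a,b,c)$, which is precisely the conclusion wanted.

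For the converse ``$\Rightarrow$'', I would appeal to Theorem \ref{val}: given a non-negative tuple in $S$ mapping to $(a,b,c)$ under $\Delta$, the theorem forces the claimed $2$-valuation dichotomy. Because both theorems explicitly take the shared gcd hypothesis as an assumption, no additional reductions or case analyses are required at this stage.

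The main obstacle, therefore, is not in assembling Theorem \ref{mres} itself but rather lies in the preceding work: constructing the preimage (Theorem \ref{triangles_Z} combined with the transformation $\Gamma_t$ of Lemma \ref{eta_triangle} to push a $\ZZ$-solution into $\ZZ_{\ge 0}^6$, as used in Theorem \ref{exi}), and pinning down the parity constraints that rule out $\nu_2(a)=\nu_2(b)=\nu_2(c)\geq 1$ (Theorem \ref{val}). Once those are in hand, the present theorem is an immediate corollary and requires only the observation that the two implications together cover both directions of the biconditional.
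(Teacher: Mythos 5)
Your proposal is correct and matches the paper exactly: Theorem \ref{mres} is stated there as an immediate corollary of Theorems \ref{exi} and \ref{val}, with the two implications of the biconditional supplied by those two theorems respectively, just as you describe.
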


\begin{Theorem}\label{Delta}
Let $a,b,c\in \NN$.
Then the triple $(a,b,c)$ appears as labels of a triangle in a classic Conway-Coxeter frieze if and only if
$\gcd(a,b)=\gcd(b,c)=\gcd(a,c)$ and
\[ \nu_2(a)=\nu_2(b)=\nu_2(c)=0\quad  \text{or} \quad
|\{\nu_2(a),\nu_2(b),\nu_2(c)\}|>1. \]
\end{Theorem}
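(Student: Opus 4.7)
The plan is to assemble Theorem \ref{Delta} directly from the results already developed in this section, since all the substantive work has been done. The statement splits into two implications and I would handle them separately.

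For the forward direction, suppose $(a,b,c)$ appears as the labels of a triangle in some classic Conway-Coxeter frieze $\mathcal{C}=(c_{i,j})$, say $a=c_{i,j}$, $b=c_{j,k}$, $c=c_{k,i}$ for some $i<j<k$. Then Lemma \ref{lem:gcd} immediately yields $\gcd(a,b)=\gcd(b,c)=\gcd(a,c)$. To obtain the $\nu_2$ condition, apply Proposition \ref{prop:Delta1} to produce a tuple in $S\cap \ZZ_{\ge 0}^6$ whose image under $\Delta$ is $(a,b,c)$, and then invoke Theorem \ref{val}, which is precisely the statement that such a preimage forces either $\nu_2(a)=\nu_2(b)=\nu_2(c)=0$ or $|\{\nu_2(a),\nu_2(b),\nu_2(c)\}|>1$.

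For the reverse direction, suppose $(a,b,c)$ satisfies the gcd condition and the $\nu_2$ condition. Theorem \ref{exi} then gives a tuple $(a_1,a_2,b_1,b_2,c_1,c_2)\in S\cap \ZZ_{\ge 0}^6$ with $\Delta((a_1,a_2,b_1,b_2,c_1,c_2))=(a,b,c)$. Applying Proposition \ref{prop:Delta2} to this tuple produces a classic Conway-Coxeter frieze $\mathcal{C}=(c_{i,j})$ and indices $i\le j\le k$ with $(c_{i,j},c_{j,k},c_{k,i})=(a,b,c)$, which is what we wanted.

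There is really no additional obstacle at this stage, since Theorem \ref{mres} already packages the equivalence between the arithmetic conditions on $(a,b,c)$ and the existence of a non-negative preimage under $\Delta$, while Propositions \ref{prop:Delta1} and \ref{prop:Delta2} translate between non-negative preimages under $\Delta$ and actual triangles inside classic Conway-Coxeter friezes. Thus the proof consists of assembling the chain
\[ (a,b,c) \text{ is a triangle in a frieze} \iff (a,b,c)\in \Delta(S\cap\ZZ_{\ge 0}^6) \iff \text{gcd and } \nu_2 \text{ conditions hold}, \]
the first equivalence being Propositions \ref{prop:Delta1} and \ref{prop:Delta2} (together with Lemma \ref{lem:gcd} to secure the gcd hypothesis needed to apply Theorem \ref{mres}) and the second being Theorem \ref{mres} itself. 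The genuine difficulty was concentrated in Theorems \ref{exi} and \ref{val}; here only bookkeeping remains.
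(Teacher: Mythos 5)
Your proposal is correct and follows essentially the same route as the paper: the paper's proof likewise combines Lemma \ref{lem:gcd} with Propositions \ref{prop:Delta1} and \ref{prop:Delta2} to reduce the question to membership in $\Delta(S\cap\ZZ_{\ge 0}^6)$, and then invokes Theorem \ref{mres} (which is exactly the packaging of Theorems \ref{exi} and \ref{val} that you unpack by hand). The only cosmetic difference is that you cite the two constituent theorems separately rather than their combined statement.
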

\begin{proof}
By Propositions \ref{prop:Delta1}, \ref{prop:Delta2}, and Lemma \ref{lem:gcd}, a triple
$(a,b,c)$ appears as labels of a triangle in a classic Conway-Coxeter frieze if and only if
$\gcd(a,b)=\gcd(b,c)=\gcd(a,c)$ and if
there exists a tuple $(a_1,a_2,b_1,b_2,c_1,c_2)\in S\cap \ZZ_{\ge 0}^6$ such that
\[ \Delta((a_1,a_2,b_1,b_2,c_1,c_2)) = (a,b,c). \]
The claim now follows by Theorem \ref{mres}.
\end{proof}

\section{Finiteness}
\label{sec:finiteness}

In this section we prove finiteness results on the number of possible 
frieze patterns with coefficients over subsets $R\subseteq\mathbb{C}$. 
Our main interest is in frieze patterns with coefficients over $\mathbb{N}$
but it will actually turn out that our finiteness result holds for 
arbitrary discrete subsets $R$ of $\mathbb{C}$. 

The following result is the key step. It is a generalization of 
an analogous result for classic frieze patterns, see \cite[Theorem 3.6]{CH}.

\begin{Lemma} \label{lem:finite}
Let $R\subseteq \mathbb{C}$ be a subset such that 
$$M:=\inf\{|x|\,:\,x\in R\setminus \{0\}\}>0.
$$
Let $\mathcal{C}=(c_{i,j})$ be a frieze pattern with coefficients of height 
$n\ge 1$
over $R\setminus \{0\}$.
Consider the boundary sequence $(c_{0,1},c_{1,2},\ldots,c_{n+2,n+3})$ and
set
$$P:=\max \{|c_{i,i+1}|\,:\,0\le i\le n+2\}.
$$
If $P\ge 1$ then every entry in the quiddity cycle of $\mathcal{C}$ has absolute value
at most 
$$\frac{P^2((PM+(n-1)P^2+M)}{M^2}.
$$
\end{Lemma}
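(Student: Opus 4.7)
The plan is to derive the bound by applying the local condition $(E_{0,j})$ of Definition~\ref{def:frieze} to two adjacent rows of the frieze pattern in a telescoping fashion, then estimating term-by-term. First, I would rewrite $(E_{0,j})$ in a useful form: using the glide identity $c_{1,n+3} = c_{0,1} = d_0$ from Remark~\ref{rem:frieze}(ii), the local condition becomes
\[ c_{0,j}\, c_{1,j+1} - c_{0,j+1}\, c_{1,j} \;=\; d_0\, d_j. \]
Since by hypothesis every entry lies in $R\setminus\{0\}$, the products $c_{1,j}\,c_{1,j+1}$ are non-zero, and dividing through produces the difference identity
\[ \frac{c_{0,j}}{c_{1,j}} - \frac{c_{0,j+1}}{c_{1,j+1}} \;=\; \frac{d_0\, d_j}{c_{1,j}\, c_{1,j+1}}. \]

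Next, I would sum this identity from $j=2$ to $j=n+2$, so the left-hand side telescopes to $c_{0,2}/c_{1,2} - c_{0,n+3}/c_{1,n+3}$. Using $c_{0,n+3}=0$ and $c_{1,n+3}=d_0\ne 0$ (by Definition~\ref{def:frieze} and the glide identity), together with $c_{0,2}=c_0$ and $c_{1,2}=d_1$, this yields the closed-form expression
\[ \frac{c_0}{d_1} \;=\; \sum_{j=2}^{n+2}\frac{d_0\,d_j}{c_{1,j}\,c_{1,j+1}}. \]

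Then the bound follows by estimating each summand, using $|d_k|\le P$ for every boundary entry and $|c_{1,k}|\ge M$ for every non-zero entry of the second row. For the $n-1$ middle indices $j=3,\dots,n+1$, the denominator is a product of two interior entries, each at least $M$ in modulus, so each middle term is at most $P^2/M^2$. The two endpoint terms are special: at $j=2$ the factor $c_{1,2}=d_1$ is itself a boundary entry, and at $j=n+2$ the factor $c_{1,n+3}=d_0$ cancels the numerator $d_0$, reducing that term to $d_{n+2}/c_{1,n+2}$ of modulus at most $P/M$. Combining the three contributions and multiplying through by $|d_1|\le P$ yields an estimate for $|c_0|$ of the claimed form $P^2(PM+(n-1)P^2+M)/M^2$. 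Since the telescoping only uses the local condition along rows $0$ and $1$, the identical argument applied to rows $i$ and $i+1$ (after shifting all indices) delivers the same bound for every quiddity entry $c_i=c_{i,i+2}$.

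The main obstacle will be the precise bookkeeping of the endpoint terms, where the factor $c_{1,2}$ or $c_{1,n+3}$ is simultaneously a boundary entry admitting both the upper bound $P$ and the lower bound $M$; one has to balance these two estimates carefully in order to recover the three separate coefficients $P^3 M$, $(n-1)P^4$, and $P^2 M$ in the numerator, rather than settling for a cruder aggregate bound such as $(n+1)P^3/M^2$ that a naive term-by-term estimate would yield.
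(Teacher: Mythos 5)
Your argument is correct, and it takes a genuinely different route from the paper's. The paper argues by contradiction: assuming some quiddity entry exceeds the bound $B$, it propagates along two consecutive rows a growing lower bound for the entries $x_k$ of the upper row and a shrinking upper bound for the entries $y_k$ of the lower row, until $|y_n|<M$ yields a contradiction. You instead extract from the local conditions $(E_{0,j})$, $2\le j\le n+2$, the exact closed formula
$\frac{c_0}{d_1}=\sum_{j=2}^{n+2}\frac{d_0 d_j}{c_{1,j}c_{1,j+1}}$
by telescoping and estimate term by term; all divisions are legitimate since the interior entries lie in $R\setminus\{0\}$ while $c_{1,2}=d_1$ and $c_{1,n+3}=d_0$ are non-zero boundary entries, and the conditions $(E_{0,j})$ indeed hold for this range of $j$. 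Carried out as you describe, the two endpoint terms contribute at most $P^2/M$ each and the $n-1$ middle terms at most $P^3/M^2$ each, giving $|c_0|\le\bigl(2P^2M+(n-1)P^3\bigr)/M^2$. Your closing worry about ``recovering the three separate coefficients'' is moot: you need not reproduce the stated numerator exactly, only verify that your bound is at most the stated one, and
$\bigl(P^3M+(n-1)P^4+P^2M\bigr)-\bigl(2P^2M+(n-1)P^3\bigr)=(P-1)\bigl(P^2M+(n-1)P^3\bigr)\ge 0$
precisely because $P\ge 1$ (you are right, however, that the crude aggregate $(n+1)P^3/M^2$ would \emph{not} suffice, since it can exceed the stated bound when $M$ is small, so the endpoint cancellations are genuinely needed). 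In fact your bound is sharper than the paper's and does not even use $P\ge 1$; it is attained with equality for the scaled example in the remark following the lemma. What each approach buys: yours gives an exact identity and a clean, non-inductive estimate that applies verbatim to every pair of adjacent rows; the paper's induction is more laborious but is the direct generalization of the argument of \cite[Theorem 3.6]{CH}.
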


\begin{proof}
We set $B:=\frac{P^2((PM+(n-1)P^2+M)}{M^2}$ for abbreviation. Note that
$B>0$ since $P>0$ and $M>0$ by assumption. 
To ease notation we also set $d_i:=c_{i,i+1}$ for all $i\in \mathbb{Z}$. 

Suppose for a contradiction that there exists an entry $x_1$ in the
quiddity cycle such that $|x_1|>B$.
Then we consider the corresponding two consecutive rows in the frieze pattern $\mathcal{C}$, with notation as in the following figure:
$$\begin{array}{cccccccccccc}
~0~ & ~d_j~ & ~x_1~ & ~x_2~ & ~\ldots~ & ~x_{n-1}~ & ~x_n~ & ~d_{j-1}~ & ~0~ & & & \\
& 0 & d_{j+1} & y_1 & y_2 & \ldots & y_{n-1} & y_n & d_{j} & ~0~ & & \\
\end{array}
$$
Note that the boundaries are as indicated because of the glide symmetry on 
the boundary entries, see Remark \ref{rem:frieze}.

The strategy of the proof is to proceed inductively along the rows.
More precisely, we want to use the following two conditions as 
induction hypotheses:
\begin{enumerate}
    \item \label{eq1} $|x_k| > \left( \frac{M(MB-P^2)}{P^2} - (k-2)P^2\right)\frac{1}{M}$
    for $k=2,\ldots, i+1$.
    \item \label{eq2} $|y_k| < \frac{MP^2}{M(MB-P^2)-(k-2)P^4} (P^2+M|x_{k+1}|)$
    for $k=2,\ldots, i$.
\end{enumerate}
For the base of the induction, let us check that condition (\ref{eq1})
holds for $k=2$ and $k=3$ and that condition (\ref{eq2}) holds for $k=2$.

By the defining condition for frieze patterns with coefficients, the 
triangle inequality, the definition of $P$ and our assumption that $P\ge 1$
we have
$$
|x_1y_1|  =  |d_jd_{j+2} + d_{j+1}x_2| 
\le  |d_jd_{j+2}| + |d_{j+1}x_2| \\
\le P^2 + P |x_2| \le P^2(1+|x_2|).
$$
Using the definition of $M$ and the assumption $|x_1|>B$ 
this yields
\begin{equation} \label{eq3}
    M\le |y_1| \le \frac{P^2(1+|x_2|)}{|x_1|} <\frac{P^2(1+|x_2|)}{B}.
\end{equation}
Solving for $|x_2|$ we obtain
\begin{equation} \label{eq4}
    |x_2| > \frac{MB}{P^2}-1 = \frac{MB-P^2}{P^2}
\end{equation}
which is condition (\ref{eq1}) for $k=2$. 

Going one step to the right, the defining condition for frieze
patterns with coefficients reads
$$x_2y_2-x_3y_1 = d_jd_{j+3}.
$$
By similar arguments as above and by using equations (\ref{eq3}) 
and (\ref{eq4}) we conclude
\begin{eqnarray*}
M & \le & |y_2| \le \frac{|d_jd_{j+3}| + |x_3y_1|}{|x_2|} 
\le P^2\,\frac{B+|x_3|+|x_3|\cdot |x_2|}{B|x_2|} \\
& = & P^2 \left( \frac{1}{|x_2|} + \frac{|x_3|}{B} \left(\frac{1}{|x_2|}+1\right)\right) \\
& < & P^2 \left( \frac{P^2}{MB-P^2} + \frac{|x_3|}{B}
\left(\frac{P^2}{MB-P^2}+1 \right)\right) \\
& = & \frac{P^2}{MB-P^2} \left( P^2 + M|x_3|\right).
\end{eqnarray*}
This shows equation (\ref{eq2}) for $k=2$.

Solving this inequality for $|x_3|$ then gives
$$|x_3| > \left( \frac{M(MB-P^2)}{P^2} -P^2\right) \frac{1}{M}
$$
and this is equation (\ref{eq1}) for $k=3$. 

Thus we have shown that we can indeed use conditions (\ref{eq1}) and
(\ref{eq2}) as induction hypotheses for an induction on $i$. 

For the induction step, we have to show that condition (\ref{eq2}) holds
for $i+1$ and that condition (\ref{eq1}) holds
for $i+2$. 

From the defining condition of frieze patterns with coefficients we have
$$x_{i+1}y_{i+1} - x_{i+2}y_i = d_j d_{j+i+2}
$$
and hence
$$|y_{i+1}| \le \frac{|d_jd_{j+i+2}| + |x_{i+2}|\cdot |y_i|}{|x_{i+1}|}
\le \frac{P^2+ |x_{i+2}|\cdot |y_i|}{|x_{i+1}|}
$$
Using the induction hypothesis for $|y_i|$ we deduce
$$|y_{i+1}| < \frac{P^2}{|x_{i+1}|} + 
\frac{MP^2 |x_{i+2}|}{M(MB-P^2)-(i-2)P^4} 
\left( \frac{P^2}{|x_{i+1}|}+M\right).
$$
Now we also plug in the induction hypothesis for $|x_{i+1}|$ and get
{\footnotesize
\begin{eqnarray*}
|y_{i+1}| & < & \frac{P^4M}{M(MB-P^2) - (i-1)P^4} + 
\frac{MP^2|x_{i+2}|}{M(MB-P^2)-(i-2)P^4}
\left( \frac{P^4M}{M(MB-P^2)-(i-1)P^4}+M \right) \\
& = & \frac{MP^2}{M(MB-P^2)-(i-1)P^4} \left( P^2+M|x_{i+2}|\right).
\end{eqnarray*}
}
This proves the induction step for $|y_{i+1}|$. 

Now we solve the last inequality for $|x_{i+2}|$. Since $M\le |y_{i+1}|$ 
this yields
$$
|x_{i+2}| > \left( \frac{M(MB-P^2)-(i-1)P^4}{P^2} - P^2\right)\frac{1}{M}\\
 =  \left( \frac{M(MB-P^2)}{P^2}-iP^2\right)\frac{1}{M}
$$
and this proves the inductive step for $|x_{i+2}|$. 
\smallskip

Noting that $|x_{n+1}|=|d_{j-1}|\le P$,
our induction argument eventually yields
\begin{equation} \label{eq5}
|y_n| < \frac{MP^3}{M(MB-P^2) - (n-2)P^4} (P+M).
\end{equation}
On the other hand, by definition of $B$ we have
$$M(MB-P^2)-(n-2)P^4 = P^3M + (n-1)P^4 -(n-2)P^4 = P^3(M+P).
$$
Together with equation (\ref{eq5}) this implies 
$|y_n| < M$, a contradiction to the definition of $M$ and the fact
that the frieze pattern $\mathcal{C}$ has non-zero entries by assumption.
This means that the assumption $|x_1|>B$ was wrong and hence every entry 
in the quiddity cycle of $\mathcal{C}$ has absolute value at most $B$,
as claimed.
\end{proof}

\begin{Remark}
Lemma \ref{lem:finite} does not hold without the assumption $P\ge 1$. As an example
consider the following frieze pattern with coefficients 
$$\begin{array}{cccccccc}
 & \ddots & & & & & & \\
0 & ~\frac{1}{2}~ & ~\frac{1}{2}~ & ~\frac{1}{2}~ & 0 & & & \\
& 0 & ~\frac{1}{2}~ & ~1~ & ~\frac{1}{2}~ & 0 & & \\
& & 0 & ~\frac{1}{2}~ & ~\frac{1}{2}~ & ~\frac{1}{2}~ & 0 & \\
& & & 0 & ~\frac{1}{2}~ & ~1~ & ~\frac{1}{2}~ & 0 \\
& & &  &  &  & \ddots & \\
\end{array}
$$
This is obtained from a classic Conway-Coxeter frieze pattern of height $n=1$ by scaling with the 
factor $\frac{1}{2}$, cf.\ Remark \ref{rem:scale}. With the notation of Lemma \ref{lem:finite}
we have $R=\frac{1}{2}\mathbb{N}$, the positive half-integers, and $M=\frac{1}{2}=P$.
Then the bound given in Lemma \ref{lem:finite} is equal to $\frac{3}{4}$ which is clearly not an 
upper bound for all entries in the quiddity cycle. 
\end{Remark}

As a consequence of Lemma \ref{lem:finite} we 
obtain the following finiteness result for frieze patterns with coefficients
over discrete subsets of complex numbers. This generalizes 
\cite[Corollary 3.8]{CH} from classic frieze patterns to frieze patterns with
coefficients. 

\begin{Proposition} \label{cor:finite}
Let $R\subseteq \mathbb{C}$ be a discrete subset, that is, $R$ has no accumulation
point. Let $n\in \mathbb{Z}_{\ge 0}$ and fix a boundary sequence in 
$(R\setminus \{0\})^{n+3}$. 
Then there are only finitely many frieze patterns with coefficients of height $n$
over $R\setminus \{0\}$ with the given boundary sequence. 
\end{Proposition}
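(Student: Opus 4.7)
The plan is to derive Proposition~\ref{cor:finite} from Lemma~\ref{lem:finite} by a bound-and-count scheme in three essentially independent steps: set up the constants so Lemma~\ref{lem:finite} applies, bound the quiddity cycles, and show that the frieze pattern is determined by boundary plus quiddity cycle.

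First, since $R\subseteq\CC$ is discrete, $0$ is not an accumulation point of $R$, so $M := \inf\{|x| : x\in R\setminus\{0\}\} > 0$; and since the boundary sequence is fixed, $P := \max_{0\le i\le n+2}|c_{i,i+1}|$ is a finite positive constant. Lemma~\ref{lem:finite} requires $P\ge 1$; if this fails I would invoke the scaling of Remark~\ref{rem:scale} by $z:=1/P$, which gives a bijection between frieze patterns over $R\setminus\{0\}$ with the prescribed boundary and frieze patterns over $zR\setminus\{0\}$ with the $z$-scaled boundary. Since $zR$ remains discrete and the new value of $P$ equals $1$, I may assume without loss of generality that $P\ge 1$. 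Then Lemma~\ref{lem:finite} supplies an explicit constant $B$ bounding every quiddity-cycle entry of any admissible frieze pattern. Because $R$ is discrete, the set $F := \{x\in R\setminus\{0\} : |x|\le B\}$ is finite; and since the boundary has period $n+3$ by Remark~\ref{rem:frieze}, the same periodicity forces the quiddity cycle to lie in the finite set $F^{n+3}$, so only finitely many quiddity cycles arise.

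Finally, I would show that the boundary together with the quiddity cycle determines the frieze pattern uniquely. Writing $\alpha^k_i := c_{i,i+k}$, so that $\alpha^1_i = d_i$ encodes the boundary and $\alpha^2_i$ the quiddity, the local condition $(E_{i,i+k})$ together with the glide equation $c_{i+1,n+i+3}=d_i$ of Remark~\ref{rem:frieze} rearranges, for $k\ge 2$, to
\[ \alpha^{k+1}_i \;=\; \frac{\alpha^k_i\,\alpha^k_{i+1} - d_i\,d_{i+k}}{\alpha^{k-1}_{i+1}}, \]
where the division is legitimate because $\alpha^{k-1}_{i+1}\in R\setminus\{0\}$ by hypothesis. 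Starting from the known $\alpha^1$ and $\alpha^2$, one computes $\alpha^3,\alpha^4,\ldots,\alpha^{n+2}$ in turn, thereby reconstructing the full frieze pattern uniquely. Combining this with the previous step produces at most finitely many frieze patterns.

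The main technical content is carried by Lemma~\ref{lem:finite}; the only mild obstacles I anticipate are the scaling reduction to $P\ge 1$ and making the periodicity of the quiddity cycle precise, both of which should be essentially bookkeeping.
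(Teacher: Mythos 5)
Your proposal is correct and follows essentially the same route as the paper: reduce to $P\ge 1$ by scaling, invoke Lemma \ref{lem:finite} plus discreteness to confine the quiddity entries to a finite set, and observe that boundary plus quiddity determine the pattern — a step the paper merely asserts and you make explicit via the recursion $\alpha^{k+1}_i=(\alpha^k_i\alpha^k_{i+1}-d_id_{i+k})/\alpha^{k-1}_{i+1}$. The one imprecise point is your appeal to ``the same periodicity'' for the quiddity cycle: Remark \ref{rem:frieze} gives periodicity only of the boundary, and periodicity of all entries (Theorem \ref{thm:glide}) requires tameness, which is not assumed here. This is harmless, though, because your own recursion shows that $n+2$ consecutive quiddity entries already determine two full consecutive rows, after which every other row (and hence every other quiddity entry) is forced by solving the local conditions $(E_{i,j})$ for $c_{i+1,j+1}$ (legitimate since all interior entries are non-zero); so the patterns inject into $F^{n+2}$ and the count is finite without any periodicity claim.
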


\begin{proof}
For $n=0$ there is only one possible frieze pattern with coefficients for
any given boundary sequence, see Example \ref{ex:frieze}.

For $n\ge 1 $ we want to use Lemma \ref{lem:finite}.
Recall that frieze patterns with coefficients can be scaled (see Remark 
\ref{rem:scale}). The existence of only finitely many frieze patterns and the discreteness
of the set $R$ are invariant 
under scaling, so we can assume that our boundary sequence satisfies the assumption
$P\ge 1$ in Lemma \ref{lem:finite}. 

Since $R$ is discrete, the origin is not an accumulation point and $R$ 
satisfies the assumption of Lemma \ref{lem:finite}. Moreover, again by 
discreteness, every closed disk contains only finitely many elements from
$R$. Then Lemma \ref{lem:finite} implies that 
there are only finitely many elements of $R$ which can appear in the quiddity
cycle of a frieze pattern with coefficients having the given boundary 
sequence. However, any non-zero frieze pattern with coefficients is 
determined by the boundary sequence and the quiddity cycle, and this proves 
the claim.
\end{proof}

\end{document}